\title{Lugares geométricos asociados a dos puntos del plano}
\g@addto@macro{\endabstract}{\@setabstract}
\newcommand{\authorfootnotes}{\renewcommand\thefootnote{\@fnsymbol\c@footnote}}%
\subjclass[2010]{00A30, 01A05, 58A05}
\DeclareFontFamily{U} {MnSymbolC}{}
\DeclareFontShape{U}{MnSymbolC}{m}{n}{
    <-6>  MnSymbolC5
   <6-7>  MnSymbolC6
   <7-8>  MnSymbolC7
   <8-9>  MnSymbolC8
   <9-10> MnSymbolC9
  <10-12> MnSymbolC10
  <12->   MnSymbolC12}{}
\DeclareFontShape{U}{MnSymbolC}{b}{n}{
    <-6>  MnSymbolC-Bold5
   <6-7>  MnSymbolC-Bold6
   <7-8>  MnSymbolC-Bold7
   <8-9>  MnSymbolC-Bold8
   <9-10> MnSymbolC-Bold9
  <10-12> MnSymbolC-Bold10
  <12->   MnSymbolC-Bold12}{}
\DeclareSymbolFont{MnSyC} {U} {MnSymbolC}{m}{n}
\DeclareMathSymbol{\squaredots}{\mathrel}{MnSyC}{14}
\newtheorem{teor}{Teorema}
\newtheorem{corol}{Corolario}
\newtheorem{obser}{Observación}
\newtheorem{prop}{Proposición}
\newtheorem{ejer}{Ejercicio}
\newtheorem*{solu}{Solución}
\newtheorem{pro}{Problema}
\newtheorem{discu}{Discusión}
\begin{document}\maketitle
\begin{center}
\normalsize
\authorfootnotes
\authorfootnotes
Jaime Chica Escobar\footnote{\email{gilrupez@hotmail.com}}\textsuperscript{1}, Hernando Manuel Quinta \'{A}vila\footnote{\email{hernandoquintana@itm.edu.co}}\textsuperscript{2} and
Jonathan Taborda Hern\'{a}ndez\footnote{\email{taborda50@gmail.com}}\textsuperscript{3}, \par \bigskip
\textsuperscript{1} Profesor jubilado UdeA, Facultad de Ciencias Exactas y Naturales, Departamento de Matem\'{a}ticas, Medell\'{i}n-Colombia.\par
\textsuperscript{2} Instituto Tecnol\'{o}gico Metropolitano, ITM, Facultad de Ciencias Económicas y Administrativas, Medell\'{i}n-Colombia.\par \bigskip
\textsuperscript{3} Acad\'{e}mico independiente.\par \bigskip
\today
\end{center}
\begin{abstract}
Tanto la elipse como la hip\'{e}rbola son lugares geométricos que se pueden definir estableciendo una relación entre puntos $P$ del plano y dos puntos fijos $A$ y $B$ (que son sus focos $F'=A$ y $F=B$).\\ Dados dos puntos $A$ y $B$ del plano (que ya no llamamos los focos $F$ y $F'$), vamos a presentar tres lugares geom\'{e}tricos asociados a $A$ y $B$ distintos de la elipse y la hipérbola.
\end{abstract}
\selectlanguage{english}
\begin{abstract}
Both the ellipse and the hyperbola are geometric places that can be defined by establishing a relationship between points $P$ of the plane and two fixed points $A$ and $B$ (which are its foci $F'=A$ and $F=B$). \\ Given two points $A$ and $B$ of the plan (which we no longer call the foci $F$ and $F'$), we are going to present three geometric places associated with $A$ and $B$ other than the ellipse and the hyperbola.
\end{abstract}
\selectlanguage{spanish}
\tableofcontents
\section{Introducción}
Tanto la elipse como la hipérbola son lugares geométricos que se pueden definir estableciendo una relación entre puntos $P$ del plano y dos puntos fijos $A$ y $B$ (que son sus focos $F'=A$ y $F=B$).\\[.3cm]
\textbf{La elipse}: Lugar de los puntos $P$ del plano para los cuales: $$PA+PB=2a,\quad(\text{$a$ constante $2a>\overline{AB}$})\quad\text{(véase Fig. \ref{185})},$$
curva que se estudió en el texto de la \textit{La elipse}.\footnote{Cf. Chica y Quintana (\cite{escobar_tratado_2013}).}
\begin{figure}[ht!]
\begin{center}
\includegraphics[scale=0.3]{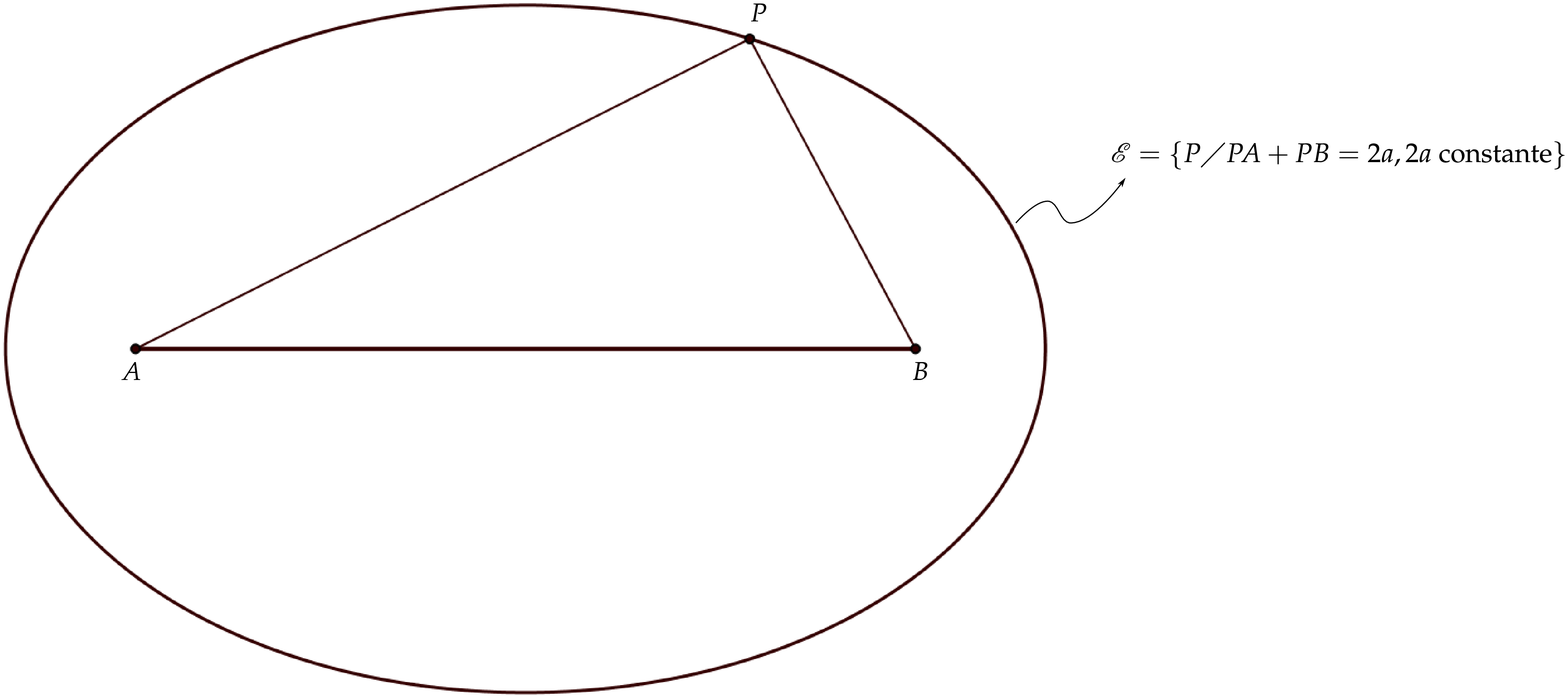}\\
\caption{La Elipse $\mathscr{E}=\left\{P\diagup PA+PB=2a, \text{2a constante}\right\}$}\label{185}
\end{center}
\end{figure}
\newline
\textbf{La hipérbola}: Lugar de los puntos $P$ del plano para los cuales: $$|PA-PB|=2a,\quad(\text{$a$ constante $2a<\overline{AB}$})\quad\text{(véase Fig. \ref{186})}$$
\begin{figure}[ht!]
\begin{center}
\includegraphics[scale=0.2]{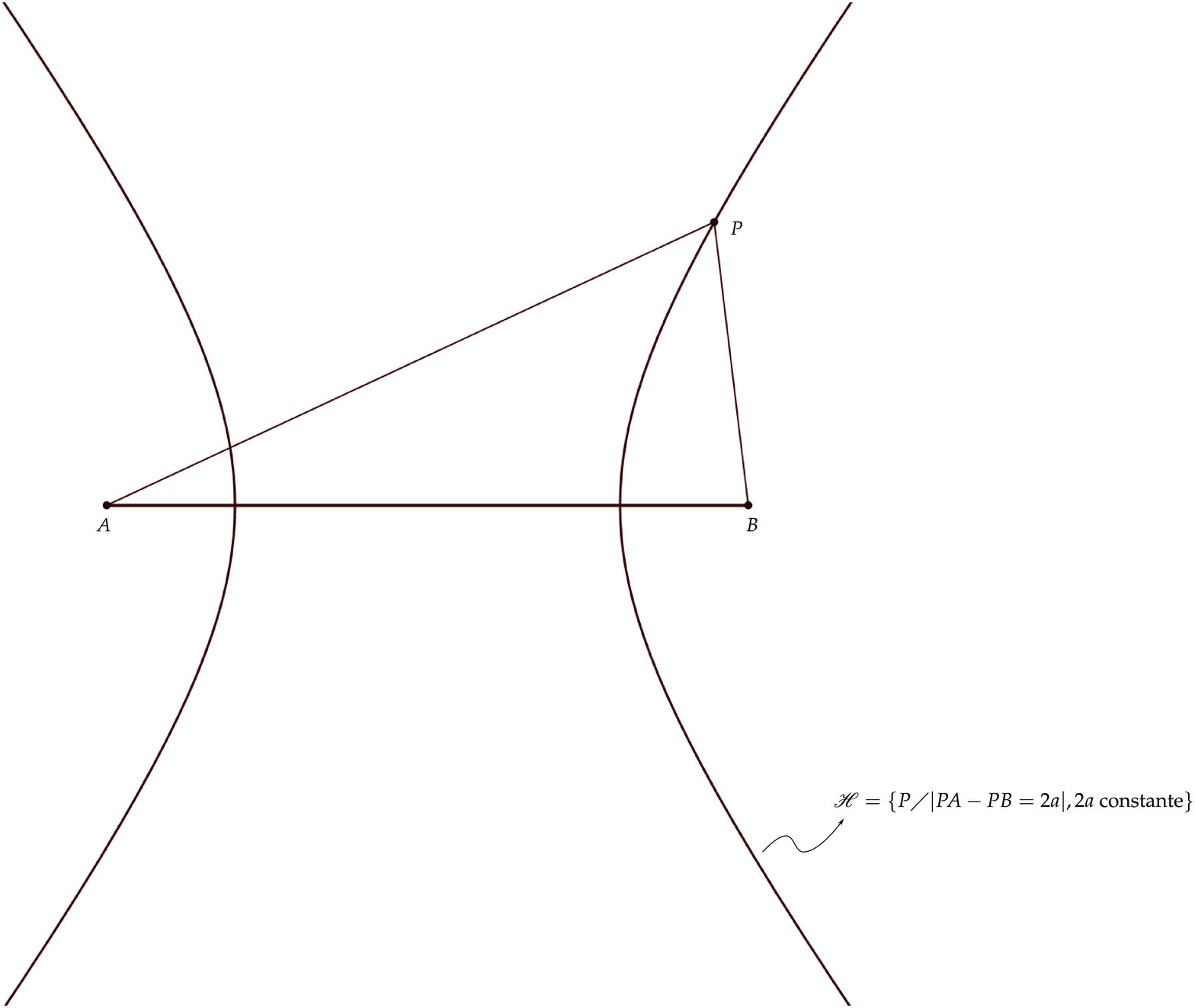}\\
\caption{La hipérbola: $\mathscr{H}=\left\{|PA-PB|=2a,\quad(\text{$a$ constante $2a<\overline{AB}$}\right\}$}\label{186}
\end{center}
\end{figure}
\newline
Curva que se ha estudiado a lo largo de la monografía sobre la \textit{Hipérbola}.\footnote{Cf. Chica y Quintana (\cite{Chica2019})}\\[.3cm]
Dados dos puntos $A$ y $B$ del plano (que ya no llamamos los focos $F'$ y $F$), vamos a presentar tres lugares geométricos asociados a $A$ y $B$ distintos de la elipse y la hipérbola.
\begin{enumerate}
\item[(1)] El lugar de los puntos $X$ del plano para los cuales $\dfrac{XA}{XB}=\text{$\lambda; \lambda\in\mathscr{R}, \lambda>0$}$.\\ En general, si $\lambda\neq 1$ este lugar es una circunferencia cuyo centro está sobre la recta $\overset{\longleftrightarrow}{AB}$ llamada la <<circunferencia de Apollonius para $A$ y $B$ y de razón $\lambda$>>, denotada por, (véase Fig. \ref{102}). $$\mathscr{C}_{AB,\lambda}:\left\{X\in\mathbb{R}^2\diagup\dfrac{XA}{XB}=\lambda, \lambda:\text{constante}\right\}$$
\begin{figure}[ht!]
\begin{center}
\includegraphics[scale=0.3]{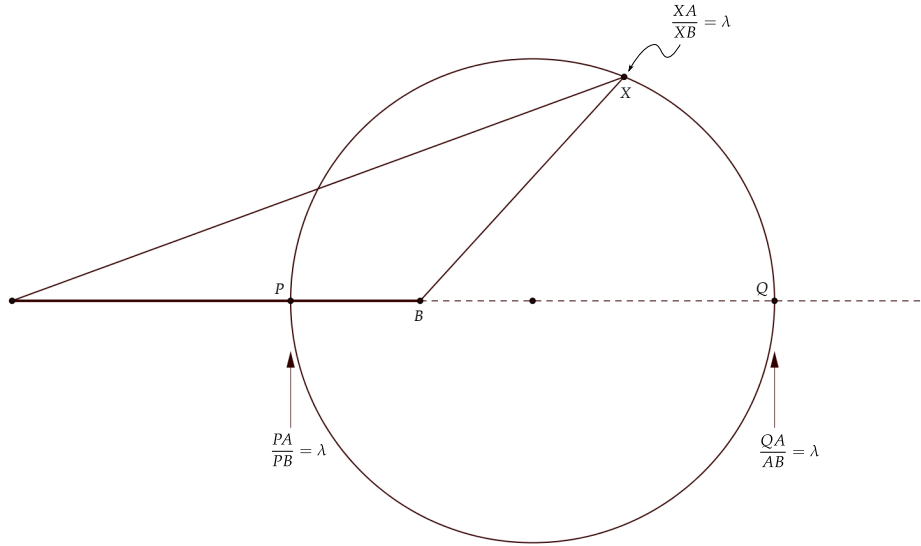}\\
\caption{Círculo de Apollonius; $\mathscr{C}_{AB,\lambda}:\left\{X\in\mathbb{R}^2\diagup\dfrac{XA}{XB}=\lambda, \lambda:\text{constante}\right\}$}\label{102}
\end{center}
\end{figure}
\newline
En el caso $\lambda=1$ el lugar geométrico descrito por $X$ es la recta mediatriz del segmento determinado por $\overline{AB}$.\\
\item[(2)] El lugar de los puntos $P$ en los que se tiene que ${PA}^2+{PB}^2=K^2:\text{constante dada.}$\\[.3cm] El lugar es ahora una circunferencia de centro en el punto medio de $\overline{AB}$, (véase Fig. \ref{194}).
\begin{figure}[ht!]
\begin{center}
\includegraphics[scale=0.3]{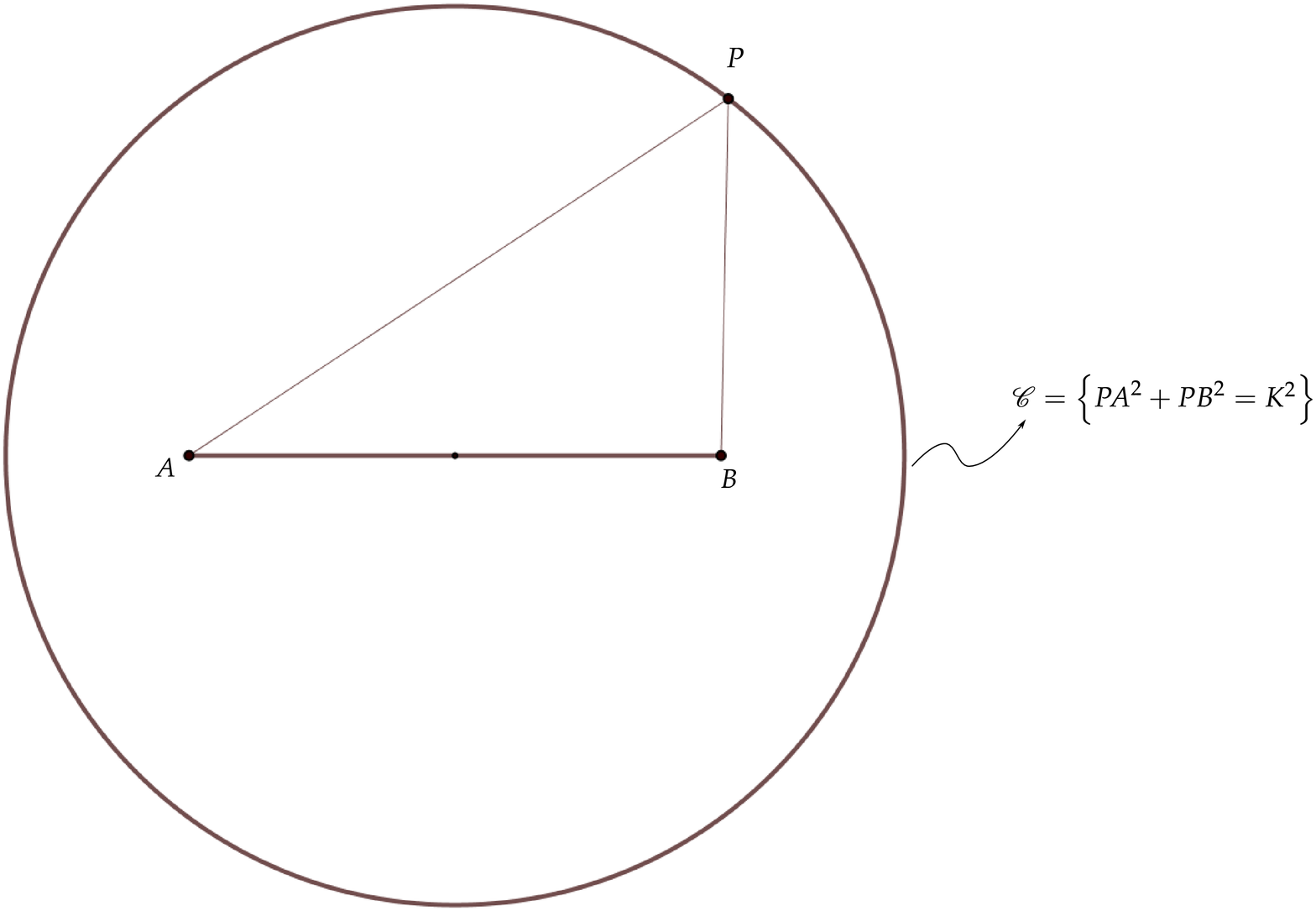}\\
\caption{$P\in\mathscr{C}\Longleftrightarrow{PA}^2+{PB}^2=K^2$}\label{194}
\end{center}
\end{figure}
\newline
\item[(3)] El lugar de los puntos $P$ del plano en los que se cumple que ${PA}^2-{PB}^2=K^2$. Ahora el lugar es una recta perpendicular a $\overline{AB}$, (véase Fig. \ref{195}).
\begin{figure}[ht!]
\begin{center}
\includegraphics[scale=0.3]{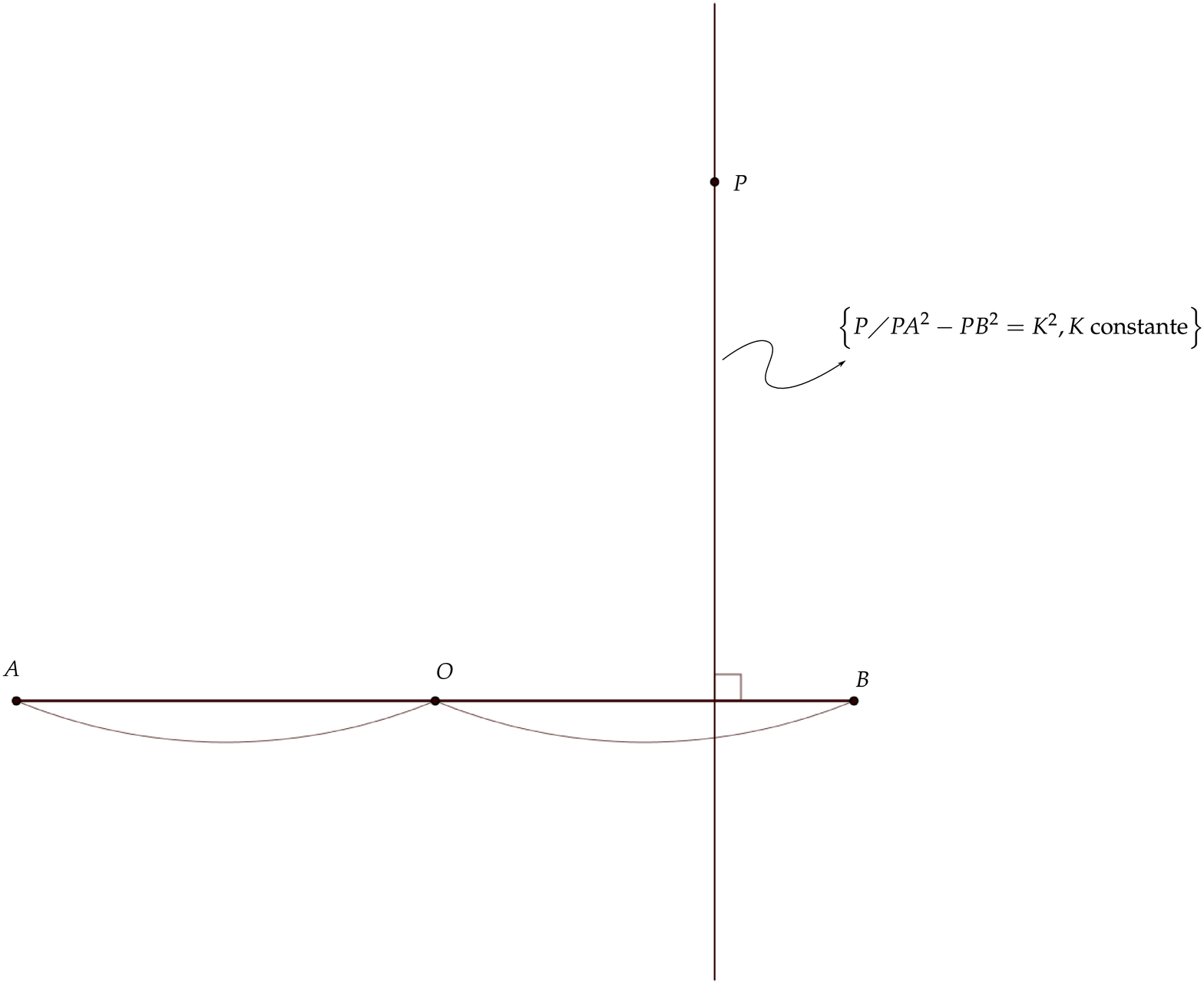}\\
\caption{$\left\{P/{PA}^2-{PB}^2=K^2, \text{K constante}\right\}$ recta perpendicular a $\overline{AB}$.}\label{195}
\end{center}
\end{figure}
\newline
A continuación se presentan por separado el estudio para cada uno de estos lugares geométricos asociados a dos puntos fijos $A$ y $B$.
%%%%%%%%%%%%%%%%%%%%%%%%%%%%%%%%%%%%%%%%%%%%%%%%%%%%%%%%%%%%%%%%%%%%%%%%%%%%%%%%%%%%%%%%%%%%%%%%%%%%%%%%%%%%%%%%%%%%%%%%%%%%%%%%%%%%%%%%%%%%%%%%%%%%%%%%%%%%%%%%%%%%%%%%%%%%%%%%%%%%%%%
\subsection{Primer lugar geométrico: círculo de Apollonius}
\noindent Para empezar iniciemos estudiando la noción de \textit{conjugados armónicos} de $A$ y $B$ para una razón $\lambda=\dfrac{m}{n}$. Fijemos $A$ y $B$ y tomemos un punto $X$ que recorre la recta $\overset{\longleftrightarrow}{AB}$, (veáse Fig. \ref{187}).
\begin{figure}[ht!]
\begin{center}
\includegraphics[scale=0.3]{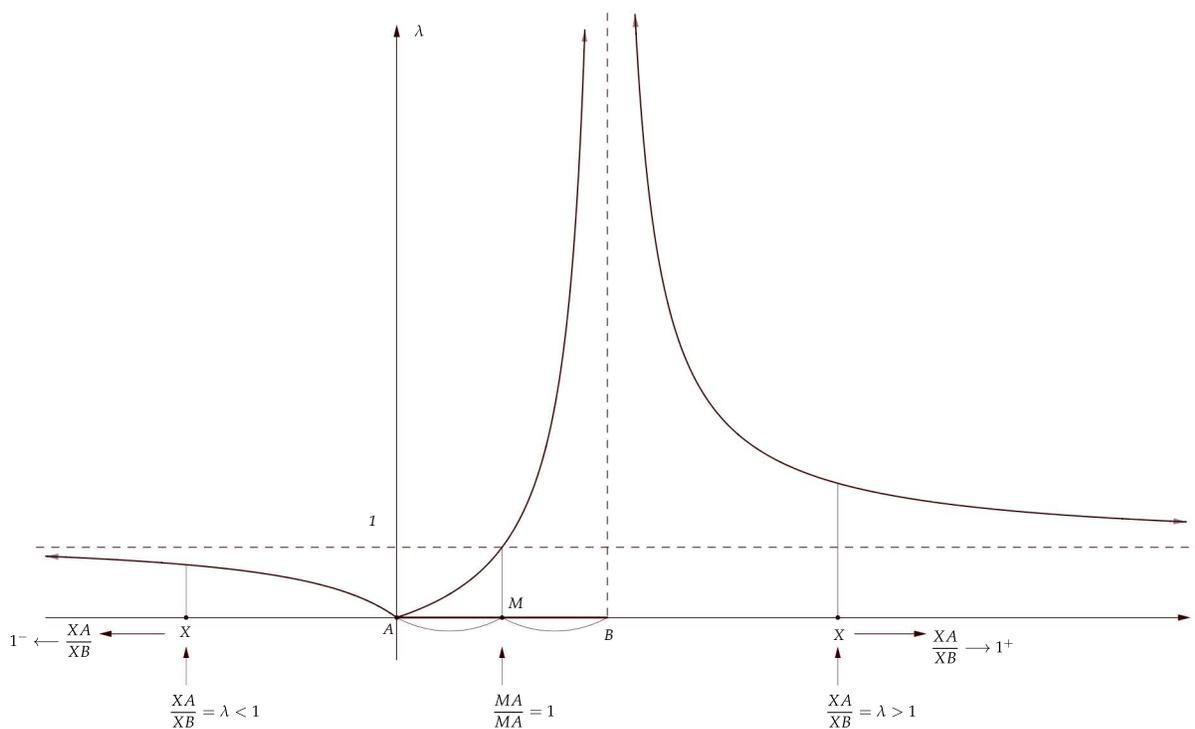}\\
\caption{Variación de $\lambda=\dfrac{XA}{XB}$ cuando $X$ se desplaza por la recta $\overset{\longleftrightarrow}{AB}$}\label{187}
\end{center}
\end{figure}
\newline
\item[$\bullet$] Si tomamos $X$ a la izquierda de $A$, $\lambda=\dfrac{XA}{XB}<1$.\\[.3cm]
\item[$\bullet$] Cuando $X$ se aleja hacia la izquierda de $A, \lambda=\dfrac{XA}{XB}\rightarrow 1^{-}$.\\
\item[$\bullet$] Cuando $X$ se tome en $A, \lambda=\dfrac{XA}{XB}=\dfrac{0}{AB}=0$.\\
\item[$\bullet$] Si $X$ se toma entre $A$ y $B, \lambda$ crece como lo indica la (Fig. \ref{187}).\\[.3cm] Si $X$ se toma en $M$, punto medio de $\overline{AB}, \lambda=\dfrac{MA}{MB}=1$ y cuando $X$ se acerque a $B, \lambda\rightarrow\empty$ ya que $XB\rightarrow 0$.\\[.3cm] Para $X$ a la derecha de $B, \lambda=\dfrac{XA}{XB}>1$ y a medida que $X$ se aleje, $\lambda$ decrece teniéndose que $\lambda\rightarrow 1$.\\[.3cm] La variación de $\lambda$ aparece indicada en la Fig. \ref{187}. De suerte que si tomamos $0<\lambda_0<1$,\\
se pueden encontrar dos puntos $P$ y $S$, $P$ entre $A$ y $M$ y $S$ a la izquierda de $A$ de modo que: $$\dfrac{SA}{SB}=\dfrac{PA}{PB}=\lambda_0,\quad 0<\lambda_0<1$$ $P$ y $S$ se llaman los \textit{conjugados armónicos de A y B para $\lambda_0$}.\\[.3cm] Si tomamos $\lambda_1>1$, podemos hallar $Q$ y $R$ en $\overset{\longleftrightarrow}{AB}$, $Q$ entre $M$ y $B, R$ a la derecha de $B$, de manera que, $$\dfrac{QA}{QB}=\dfrac{RA}{RB}=\lambda_1\quad\lambda_1>1$$ $Q$ y $R$ se llaman los \textit{conjugados armónicos de A, B para $\lambda_1$}, (véase Fig. \ref{188}).
\begin{figure}[ht!]
\begin{center}
\includegraphics[scale=0.3]{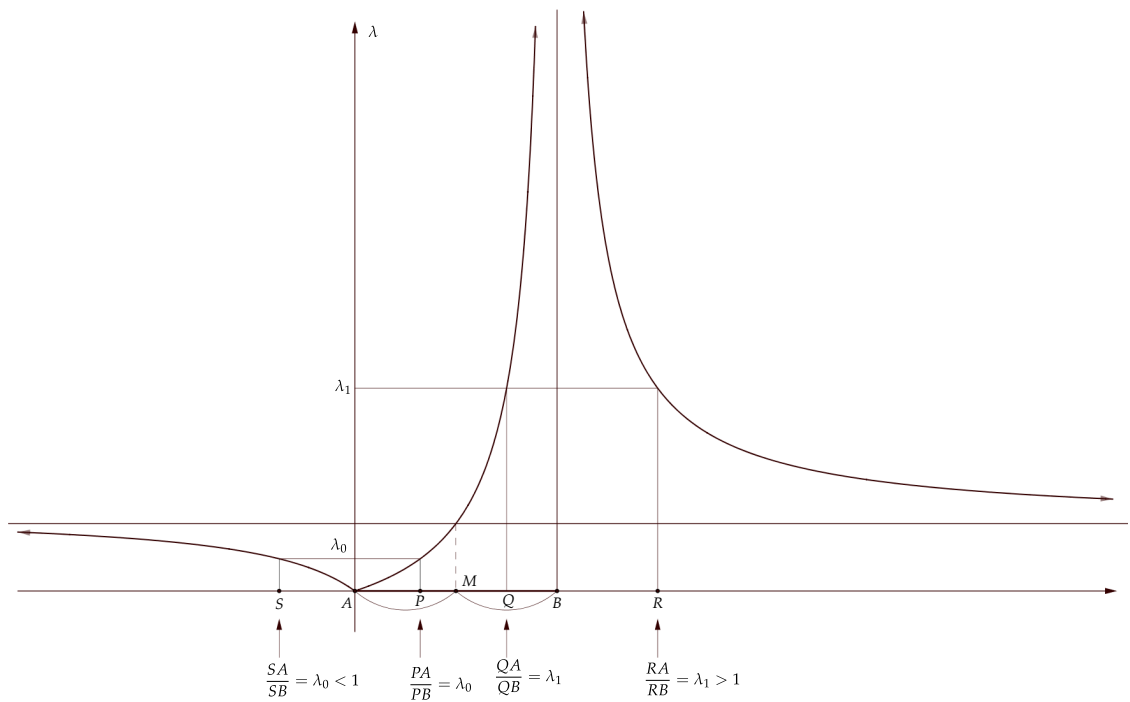}\\
\caption{$S$ y $P$ conjugados armónicos de $A, B$ para $\lambda_0>$; $Q$ y $R$ conjugados armónicos de $A,B$ para $\lambda_1>1$}\label{188}
\end{center}
\end{figure}
\newline
Así las cosas, si $\lambda=\dfrac{3}{2}>1$, por ejemplo, podemos construir dos puntos $Q$ y $R$, $Q$ entre $M$ y $B, R$ a la derecha de $B$ de modo que, $$\dfrac{QA}{QB}=\dfrac{RA}{RB}=\dfrac{3}{2}$$ Esta construcción se lleva a cabo con regla y compás así: se traza una semirecta $x$ con origen en $A$ y con una abertura de compás fija se marcan los puntos 1, 2, 3, 4, 5.
Luego se une 5 con $B$ y por el punto 3 se traza la paralela a $\overline{B5}$ hasta cortar a $\overline{AB}$ en $Q$, (véase Fig. \ref{189}).\\[.3cm] Se tiene entonces que, $$\dfrac{QA}{QB}=\dfrac{\overline{AB}}{\overline{35}}=\dfrac{3}{2}=\lambda$$
\begin{figure}[ht!]
\begin{center}
\includegraphics[scale=0.3]{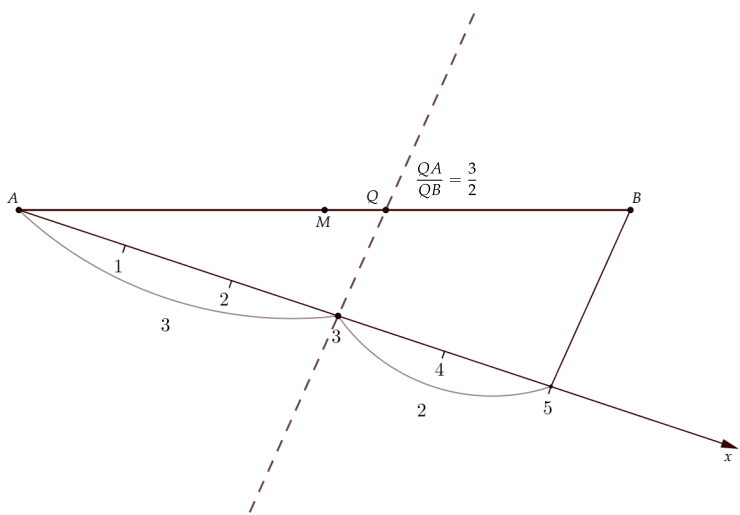}\\
\caption{Construcción con regla y compás del punto $Q$ entre $M$ y $B$}\label{189}
\end{center}
\end{figure}
\newline
Ahora se construye $R$, (véase Fig. \ref{190}).
\begin{figure}[ht!]
\begin{center}
\includegraphics[scale=0.3]{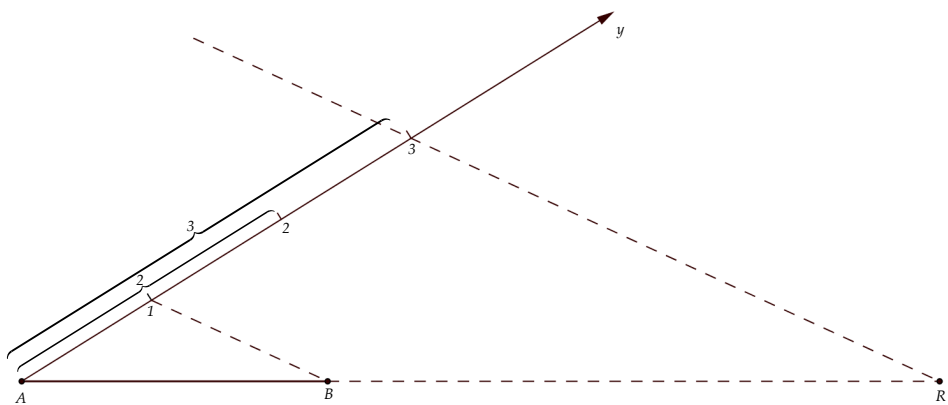}\\
\caption{Construcción con regla y compás del punto $R$, $R$ a la derecha de $B$, con $\lambda=\dfrac{3}{2}$}\label{190}
\end{center}
\end{figure}
\newline
Se traza por $A$ la semirecta $y$ y se señalan con el compás los puntos 1, 2 y 3. Se une 1 con $B$ y por 3 se traza $\overline{3R}$ paralelo a $\overline{1B}$.\\[.3cm] Se tiene entonces que, $$\dfrac{RA}{RB}=\dfrac{3}{2}=\lambda$$ La construcción de los puntos $P$ y $Q$ en los que se cumple que: $$\dfrac{PA}{PB}=\dfrac{QA}{QB}=\dfrac{2}{5}=\lambda$$ aparece indicada en la (Fig. \ref{191}).
\begin{figure}[ht!]
\begin{center}
\includegraphics[scale=0.3]{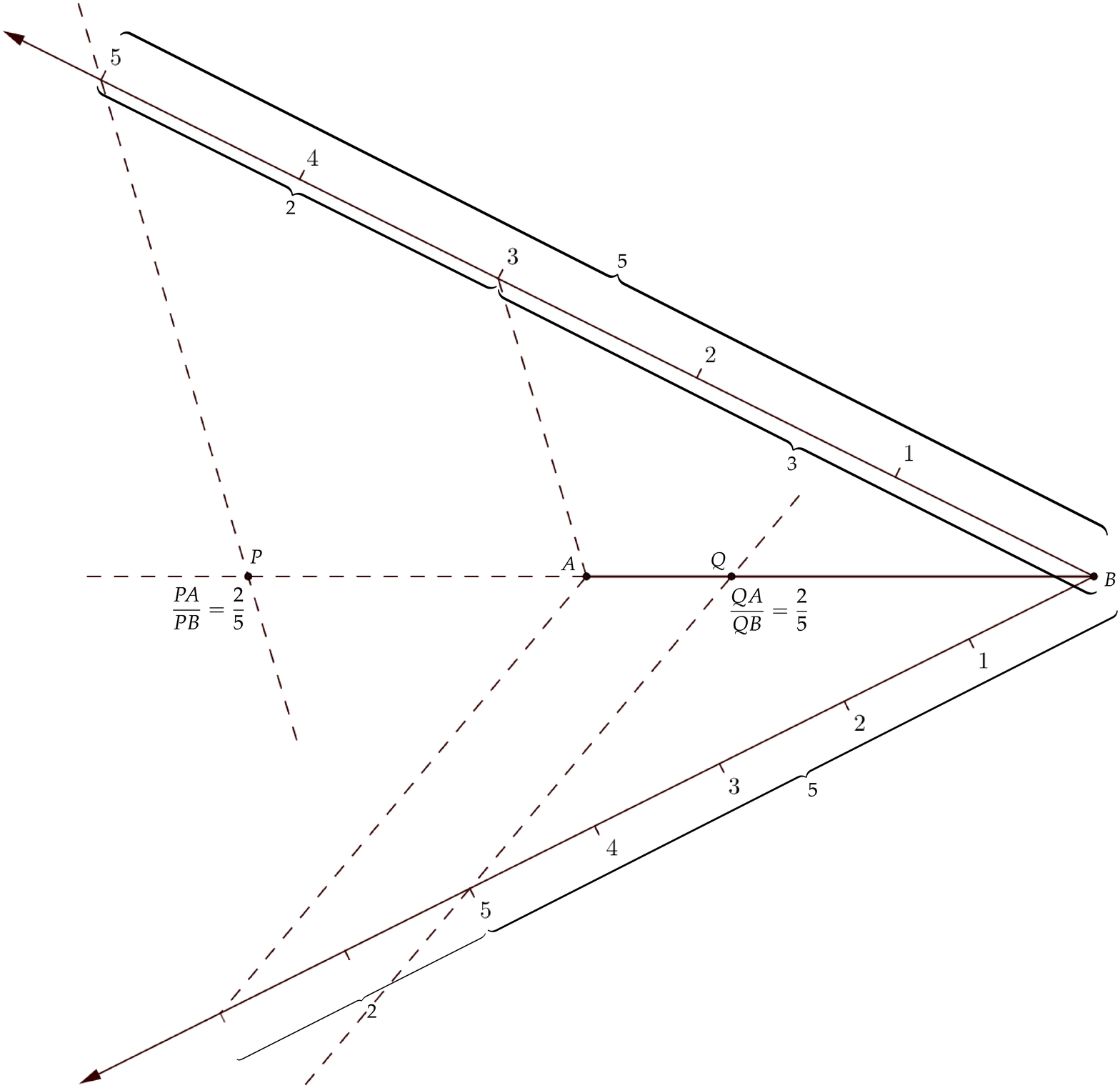}\\
\caption{Construcción con regla y compás de los puntos $Q$ y $P$, $P$ a la izquierda de $A$ cuando $\lambda=\dfrac{3}{5}$}\label{191}
\end{center}
\end{figure}
\newline
Otro hecho geométrico que necesitamos es el siguiente.
\begin{teor}
Si en un triángulo $\triangle ABC$, se trazan las bisectrices interior y exterior en $A$,  llamemos $D$ y $E$ a los pies de esas bisectrices. Entonces, $$\dfrac{DB}{DC}=\dfrac{EB}{EC}=\dfrac{c}{b}\quad\text{(véase Fig. \ref{192})}$$
\end{teor}
\noindent La demostración se omite, trate el lector de hacerla o consulte un libro de geometría.
\begin{figure}[ht!]
\begin{center}
\includegraphics[scale=0.3]{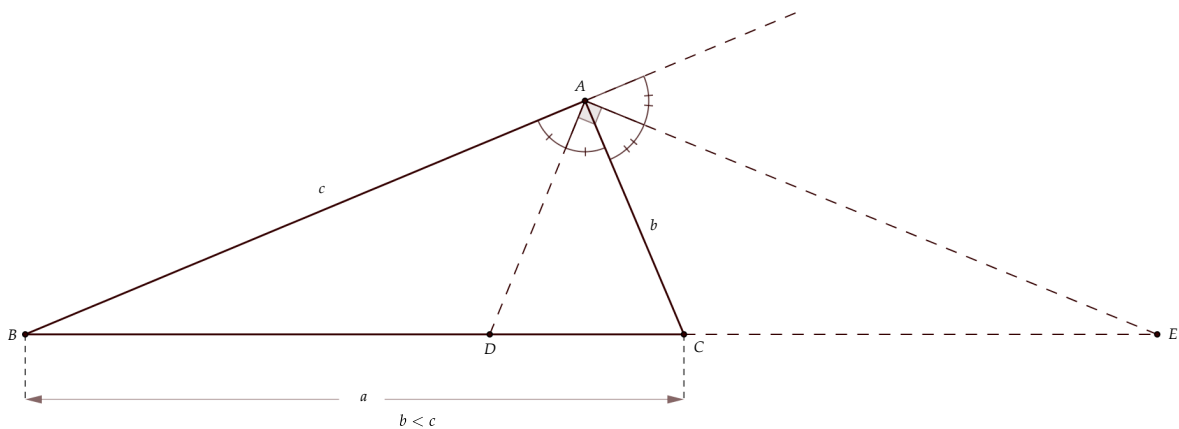}\\
\caption{$\overline{AD}, \overline{AE}$ bisectrices interior y exterior en $A$. $\dfrac{DB}{DC}=\dfrac{EB}{EC}=\dfrac{c}{b}$}\label{192}
\end{center}
\vspace{0.5cm}
\end{figure}
El teorema anterior tiene un recíproco.
\begin{teor}
Se tiene un triángulo $\triangle ABC$ en el que $b<c$ y dos puntos $F$ entre $B$ y $C$ y $G$ a la derecha de $C$.\\
\item[(1)] Si $\dfrac{FB}{FC}=\dfrac{c}{b}$, entonces $F$ es el pie de la bisectriz interior trazada desde $A$.\\
\item[(2)] Si $\dfrac{GB}{GC}=\dfrac{c}{b}$, entonces $G$ es el pie de la bisectriz exterior trazada desde $A$, (véase Fig. \ref{193}).
\end{teor}
\begin{figure}[ht!]
\begin{center}
\includegraphics[scale=0.3]{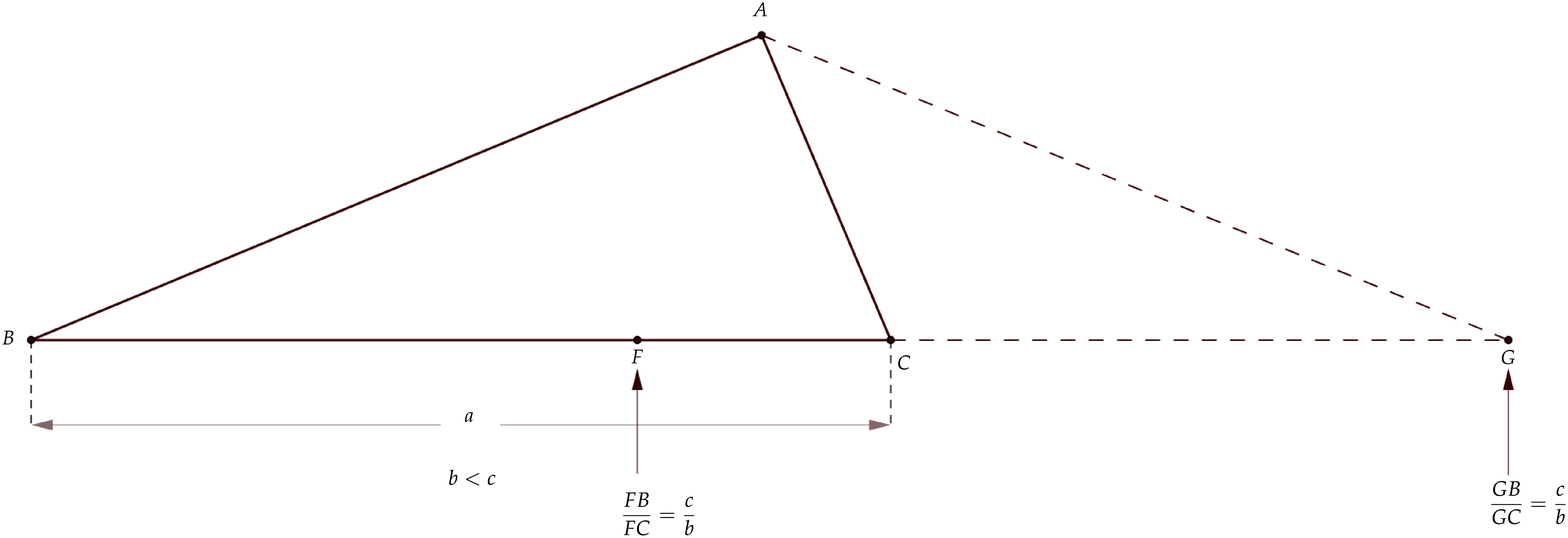}\\
\caption{}\label{193}
\end{center}
\vspace{0.5cm}
\end{figure}
\noindent La demostración se omite. Llévela a cabo.\\[.3cm] Y ahora si la \textit{circunferencia de Apollonius}.
\begin{prop}[\textbf{Círculo de Apollonius}]
Sea $\overline{AB}$ un segmento dado y $\dfrac{p}{q}$ un decimal dado con $p>q$. Llamemos $P$ y $Q$ a los conjugados armónicos de $A$ y $B$ para $\dfrac{p}{q}$. O sea, $P$ está entre $A$ y $B, Q$ a la derecha de $B$ y se tiene que, $$\dfrac{PA}{PB}=\dfrac{QA}{QB}=\dfrac{p}{q}$$ El lugar de los puntos $X$ del plano en los que se cumple que, $$\dfrac{XA}{XB}=\dfrac{p}{q}$$ es la circunferencia de diámetro $\overline{PQ}$.
\end{prop}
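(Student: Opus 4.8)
\noindent\emph{Esbozo de la demostraci\'{o}n.} El plan es probar la igualdad de conjuntos $\mathscr{L}=\mathscr{C}$, donde $\mathscr{L}=\left\{X\in\mathbb{R}^2\diagup XA/XB=p/q\right\}$ es el lugar buscado y $\mathscr{C}$ es la circunferencia de di\'{a}metro $\overline{PQ}$, verificando las dos inclusiones y tratando aparte los puntos situados sobre la recta $\overset{\longleftrightarrow}{AB}$. Del an\'{a}lisis de la variaci\'{o}n de $\lambda=XA/XB$ hecho m\'{a}s arriba (v\'{e}ase Fig. \ref{187}) se sigue que $\lambda$ es estrictamente creciente sobre $\overline{AB}$ y estrictamente decreciente a la derecha de $B$, de modo que para el valor $p/q>1$ hay exactamente un punto entre $A$ y $B$ y exactamente uno a la derecha de $B$ con esa raz\'{o}n: precisamente $P$ y $Q$. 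Como $P$ y $Q$ son los extremos del di\'{a}metro, est\'{a}n en $\mathscr{C}$, y $\mathscr{C}$ no corta a $\overset{\longleftrightarrow}{AB}$ en ning\'{u}n otro punto; queda as\'{i} resuelto el caso colineal.

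Para la inclusi\'{o}n $\mathscr{L}\subseteq\mathscr{C}$ tomar\'{i}a un punto $X\in\mathscr{L}$ que no est\'{e} en $\overset{\longleftrightarrow}{AB}$ y considerar\'{i}a el tri\'{a}ngulo $\triangle AXB$, de v\'{e}rtice $X$ y lado opuesto $\overline{AB}$. Como $p>q$ se tiene $XA>XB$; y como $P$ est\'{a} entre $A$ y $B$ con $PA/PB=p/q=XA/XB$, el teorema rec\'{i}proco de la bisectriz enunciado arriba asegura que $\overline{XP}$ es la bisectriz interior del \'{a}ngulo $\angle AXB$. An\'{a}logamente, como $Q$ est\'{a} a la derecha de $B$ con $QA/QB=p/q=XA/XB$, la recta $\overset{\longleftrightarrow}{XQ}$ es la bisectriz exterior en $X$. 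Las bisectrices interior y exterior en un v\'{e}rtice son perpendiculares, luego $\angle PXQ=90^{\circ}$; como $X$ ve el segmento $\overline{PQ}$ bajo un \'{a}ngulo recto, $X$ pertenece a la circunferencia de di\'{a}metro $\overline{PQ}$, esto es $X\in\mathscr{C}$.

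Para la inclusi\'{o}n $\mathscr{C}\subseteq\mathscr{L}$ tomar\'{i}a $X\in\mathscr{C}$ fuera de $\overset{\longleftrightarrow}{AB}$, de suerte que $\angle PXQ=90^{\circ}$, e introducir\'{i}a dos puntos auxiliares sobre la recta $\overset{\longleftrightarrow}{AX}$: sea $Y$ el corte de $\overset{\longleftrightarrow}{AX}$ con la paralela a $\overset{\longleftrightarrow}{XP}$ trazada por $B$, y $Z$ el corte de $\overset{\longleftrightarrow}{AX}$ con la paralela a $\overset{\longleftrightarrow}{XQ}$ trazada por $B$. Aplicando el teorema de Tales a las dos transversales que salen de $A$ se obtiene $\dfrac{AX}{AY}=\dfrac{AP}{AB}=\dfrac{p}{p+q}$ y $\dfrac{AX}{AZ}=\dfrac{AQ}{AB}=\dfrac{p}{p-q}$, de donde $XY=XZ=\dfrac{q}{p}\,XA$ y, por el orden $A,Z,X,Y$ sobre la recta, $X$ es el punto medio de $\overline{ZY}$. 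Como $\overline{BY}\parallel\overline{XP}$, $\overline{BZ}\parallel\overline{XQ}$ y $\overline{XP}\perp\overline{XQ}$, se tiene $\angle ZBY=90^{\circ}$; luego $B$ est\'{a} en la circunferencia de di\'{a}metro $\overline{ZY}$, cuyo centro es $X$, y por lo tanto $XB=XY=\dfrac{q}{p}\,XA$, esto es $XA/XB=p/q$ y $X\in\mathscr{L}$.

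El paso que espero sea el m\'{a}s delicado es esta \'{u}ltima inclusi\'{o}n: hay que elegir bien los puntos auxiliares $Y$ y $Z$ y controlar la configuraci\'{o}n (qu\'{e} divisi\'{o}n es interior y cu\'{a}l exterior, el orden de los puntos sobre $\overset{\longleftrightarrow}{AX}$) para que las proporciones de Tales salgan con los signos correctos; lo dem\'{a}s son comprobaciones rutinarias. Si se prefiere evitar la construcci\'{o}n auxiliar, una alternativa es poner $A=(0,0)$ y $B=(d,0)$ con $d=\overline{AB}$, escribir $\mathscr{L}$ como $q^{2}(x^{2}+y^{2})=p^{2}\big((x-d)^{2}+y^{2}\big)$, constatar que es una circunferencia con centro sobre $\overset{\longleftrightarrow}{AB}$ y que $P$ y $Q$ son sus dos cortes con esa recta, de donde $\overline{PQ}$ es un di\'{a}metro.
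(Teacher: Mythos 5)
Tu propuesta es correcta y sigue esencialmente el mismo camino que la demostraci\'{o}n del texto: la inclusi\'{o}n $\mathscr{L}\subseteq\mathscr{C}$ v\'{i}a el rec\'{i}proco del teorema de las bisectrices en el tri\'{a}ngulo $\triangle AXB$ (perpendicularidad de las bisectrices interior y exterior en $X$), y la inclusi\'{o}n $\mathscr{C}\subseteq\mathscr{L}$ trazando por $B$ las paralelas a $\overline{XP}$ y $\overline{XQ}$ hasta cortar la recta $\overset{\longleftrightarrow}{AX}$ (tus puntos $Y,Z$ son los $R,S$ del texto) y concluyendo $XB=XY=XZ$, que es la misma observaci\'{o}n de la mediana relativa a la hipotenusa expresada como circunferencia de Tales. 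Tus a\~{n}adidos (el caso colineal y la alternativa anal\'{i}tica) son correctos pero no alteran el n\'{u}cleo del argumento.
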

\begin{proof}
Se debe demostrar que $\left\{X\diagup\dfrac{XA}{XB}=\dfrac{p}{q}\right\}=\mathscr{C}:\text{la circunferencia de diámetro $PQ$}$.\\
\item[(1)] Sea $X$ un punto del plano en el que $\dfrac{XA}{XB}=\dfrac{p}{q}$.\\ por hipótesis, $$\dfrac{PA}{PB}=\dfrac{QA}{QB}=\dfrac{p}{q}$$ Veamos que $X\in\mathscr{C}$ donde $\mathscr{C}$ en la circunferencia de diámetro $\overline{PQ}$, (véase Fig. \ref{196}).\\[.3cm]
\begin{figure}[ht!]
\begin{center}
\includegraphics[scale=0.3]{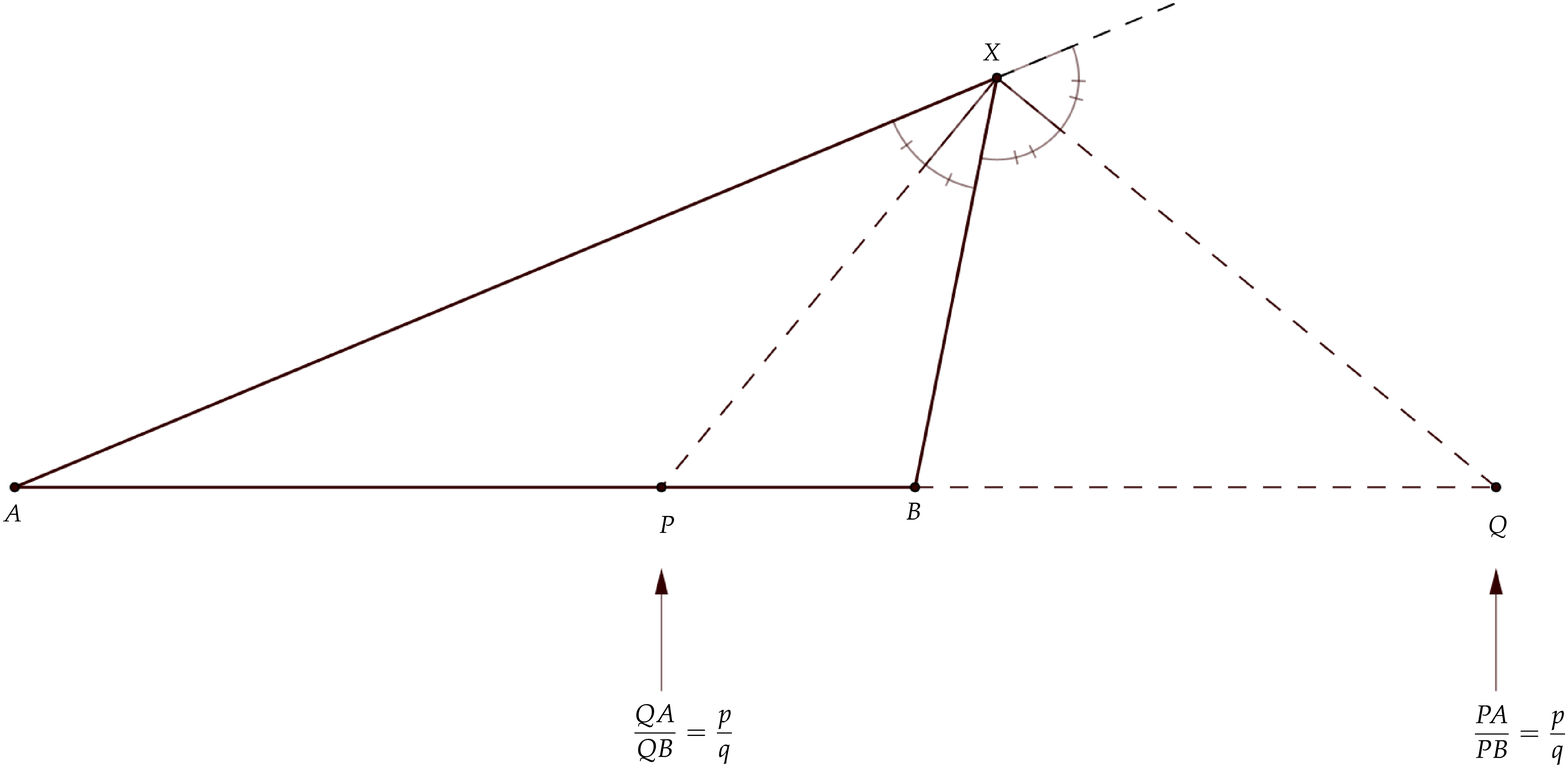}\\
\caption{}\label{196}
\end{center}
\end{figure}
\newline
Unimos $X$ con $A,B,P$ y $Q$. Bastará con que $\widehat{PXQ}=90^\circ$.\\[.3cm] Consideremos el triángulo $\triangle AXB$.\\[.3cm] Como $\dfrac{PA}{PB}=\dfrac{XA}{XB},\quad P$ es el pie de la bisectriz interior en $A$.\\ Como $\dfrac{QA}{QB}=\dfrac{XA}{XB},\quad Q$ es el pie de la bisectriz exterior en $A$.\\[.3cm] Por lo tanto, $\overline{PX}\perp QX$ y $\widehat{PXQ}=90^\circ$ lo que nos prueba que $X$ está en la circunferencia de diámetro $\overline{PQ}$.\\
\item[(2)] Tomemos ahora $X\in\mathscr{C}$: la circunferencia de diámetro $\overline{PQ}$.\\[.3cm] Por hipótesis, $$\dfrac{PA}{PB}=\dfrac{QA}{QB}=\dfrac{p}{q}$$ Veamos que $\dfrac{XA}{XB}=\dfrac{p}{q}$, (véase Fig. \ref{197}).\\[.3cm]
\begin{figure}[ht!]
\begin{center}
\includegraphics[scale=0.3]{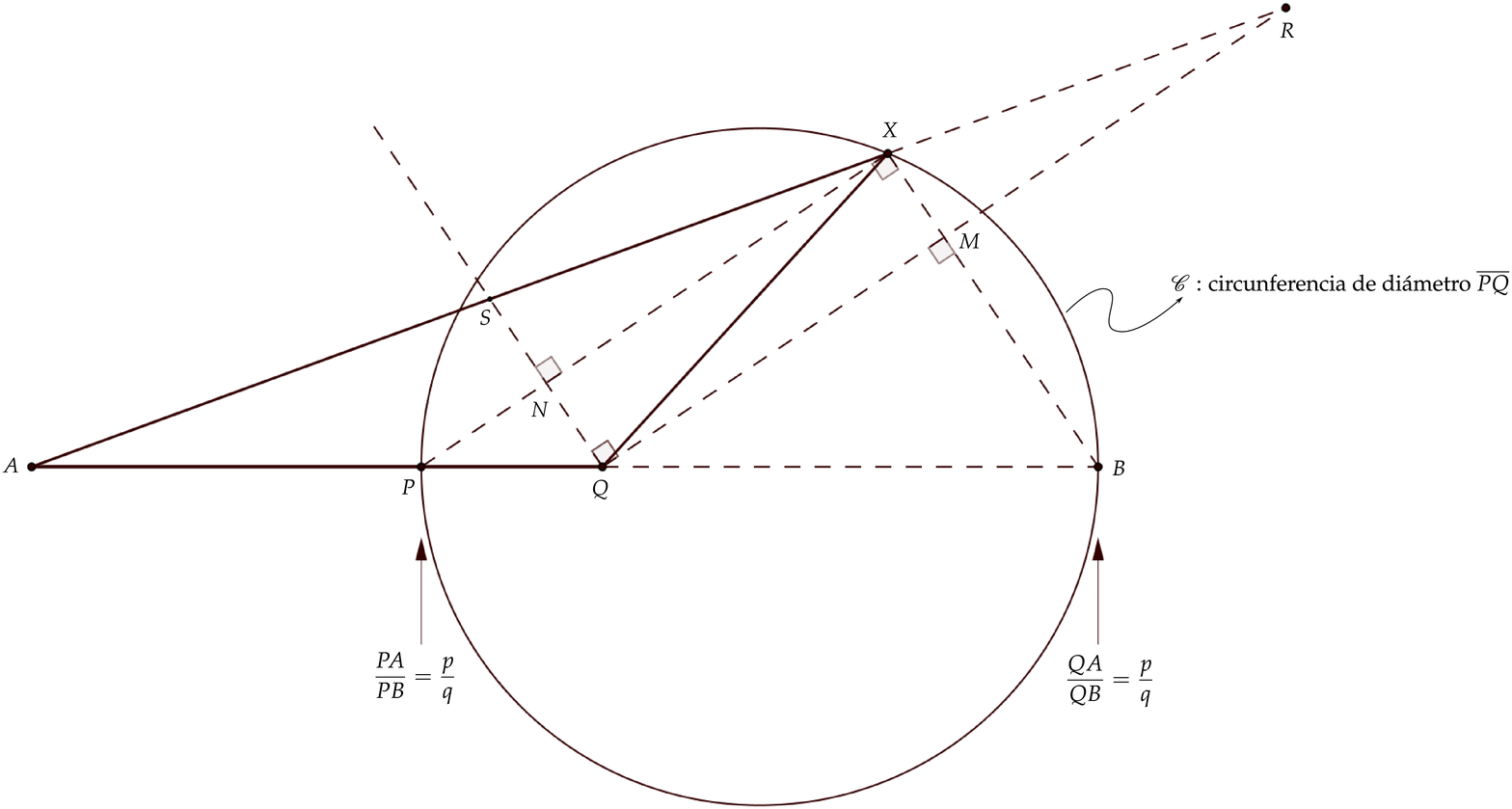}\\
\caption{}\label{197}
\end{center}
\end{figure}
\newline
Se une $X$ con $P$ y $Q$. Como $X\in\mathscr{C}$ y $\widehat{PXQ}$ está inscrito en el diámetro $\overline{PQ}, \overline{QX}\perp\overline{PX}$.\\
\item[$\bullet$] Se traza por $B, BMR\parallel\overline{PX}$.\\[.3cm] Como $\overline{BM}\parallel\overline{PX}$ y $\overline{QX}\perp\overline{PX},
\overline{QX}\perp\overline{BM}$ y por lo tanto, $\widehat{M}=90^\circ$.\\[.3cm] Ahora, por el Teorema Básico de Semejanza (TBS),
\begin{equation}\label{98}
\dfrac{XA}{XR}=\dfrac{PA}{PB}=\dfrac{p}{q}.\quad\text{Así que $\dfrac{XA}{XR}=\dfrac{p}{q}$}
\end{equation}
\item[$\bullet$] A continuación se traza por $B, BNS\parallel\overline{QX}$.\\[.3cm] Como $\widehat{BN}\parallel\overline{QX}$ y $\overline{PX}\parallel\overline{QX}, \overline{PX}\perp\overline{QN}$ y por lo tanto, $\widehat{N}=90^\circ$.\\[.3cm] Como $\widehat{N}=\widehat{X}=\widehat{M}=90^\circ$, entonces, $\widehat{NBM}=90^\circ$ y el cuadrilátero $NXMB$ es un rectángulo y el triángulo $\triangle SBR$ es rectángulo en $B$. Ahora, por el TBS,
\begin{align*}
\dfrac{XA}{XS}=\dfrac{QA}{QB}&\underset{\uparrow}{=}\dfrac{p}{q}\\
&\text{hipótesis}
\end{align*}
Así que,
\begin{equation}\label{99}
\dfrac{XA}{XS}=\dfrac{p}{q}
\end{equation}
De \eqref{98} y \eqref{99}, $\dfrac{XA}{XR}=\dfrac{XA}{XS}$. O sea que $XR=XS$ lo que nos demuestra que en el triángulo rectángulo $SBR, \overline{BX}$ es mediana relativa a la hipotenusa, y por lo tanto, $XB=XR=XS$ y regresando a \eqref{98}: $\dfrac{XA}{XB}=\dfrac{p}{q}$
\end{proof}
\end{enumerate}
\begin{ejer}
Cálculo del radio del círculo de Apollonius.\\[.3cm] Ecuación analítica de la circunferencia de Apollonius.
\end{ejer}
\begin{solu}
Sea $\mathscr{C}$ la circunferencia de Apollonius de $\overline{AB}$ para $\dfrac{m}{n},\quad m>n$, (véase Fig. \ref{198}).\\[.3cm]
\begin{figure}[ht!]
\begin{center}
\includegraphics[scale=0.3]{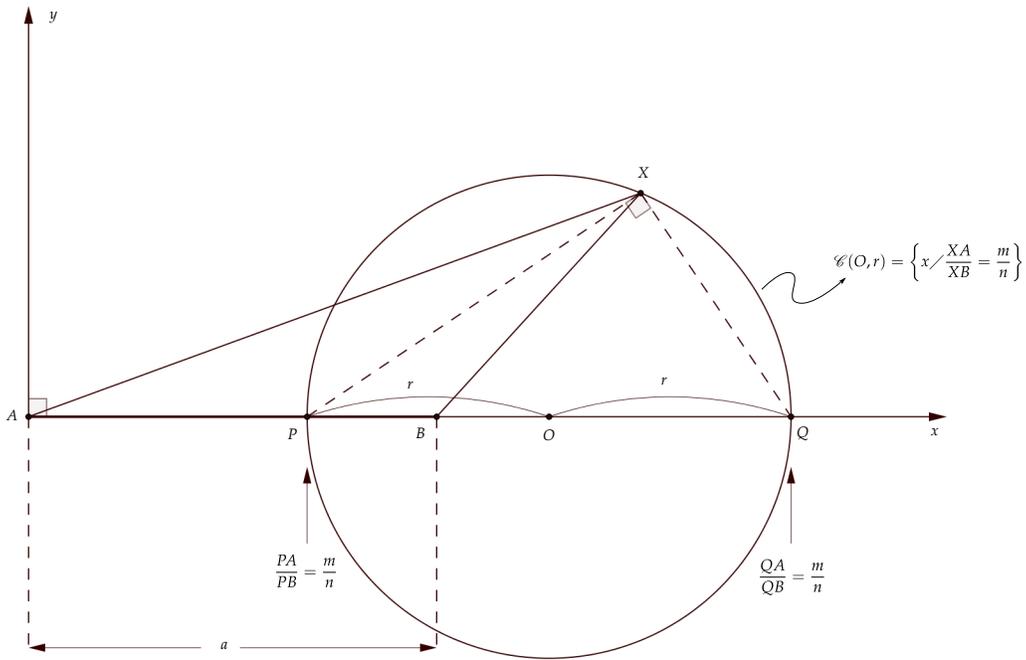}\\
\caption{Circunferencia de Apollonius para $A, B$ y razón $\lambda=\dfrac{m}{n}$}\label{198}
\end{center}
\end{figure}
\newline
Supongamos que hemos construido a $P$ y $Q$; los conjugados armónicos de $A,B$ para $\dfrac{m}{n}$.\\[.3cm] Llamemos $O$ el centro de la circunferencia de Apollonius y $r$ a su radio.\\[.3cm] Entonces,
\begin{align*}
\dfrac{PA}{PB}&=\dfrac{QA}{QB}=\dfrac{m}{n}\\
\shortintertext{Luego,}
\dfrac{PA+PB}{PB}&=\dfrac{m+n}{n},\quad\dfrac{AB}{PB}=\dfrac{m+n}{n}\\
\therefore PB&=\dfrac{n}{m+n}AB\\
\shortintertext{También,}
\dfrac{QA-QB}{QB}&=\dfrac{m-n}{n},\quad\dfrac{AB}{QB}=\dfrac{m-n}{n}\\
\therefore QB&=\dfrac{n}{m-n}AB\\
\shortintertext{Así que,}
2r&=PB+QB\\
&=\left(\dfrac{n}{m+n}+\dfrac{n}{m-n}\right)AB\\
&=\dfrac{2mn}{m^2-n^2}AB\\
\shortintertext{y por lo tanto,}
r&=\dfrac{mn}{m^2-n^2}a
\end{align*}
\noindent Ahora localicemos el centro $O$ de $\mathscr{C}$.
\begin{align*}
OB=r\cdot\underset{\stackrel{\parallel}{PB}}{\underbrace{\dfrac{n}{m+n}}}AB&=\left(\dfrac{mn}{m^2-n^2}-\dfrac{n}{m+n}\right)AB\\
&=\dfrac{n^2}{m^2-n^2}AB\\
AO=AB+OB&=\left(1+\dfrac{n^2}{m^2-n^2}\right)AB=\dfrac{m^2}{m^2-n^2}a
\end{align*}
Finalmente, la ecuación de la circunferencia $\mathscr{C}$ con respecto al sistema $xy$ con origen en $A$ es: $${\left(x-\dfrac{m^2}{m^2-n^2}a\right)}^2+y^2={\left(\dfrac{mn}{m^2-n^2}a\right)}^2$$
\end{solu}
%%%%%%%%%%%%%%%%%%%%%%%%%%%%%%%%%%%%%%%%%%%%%%%%%%%%%%%%%%%%%%%%%%%%%%%%%%%%%%%%%%%%%%%%%%%%%%%%%%%%%%%%%%%%%%%%%%%%%%%%%%%%%%%%%%%%%%%%%%%%%%%%%%%%%%%%%%%%%%%%%%%%%%%%%%%%%%%%%%%%%%%
\subsection{Segundo lugar geométrico: circunferencia de centro en el punto medio de $\overline{AB}$}
Para tratar el segundo lugar necesitamos el <<teorema de la mediana>>.
\begin{teor}
La suma de los cuadrados de dos lados de un triángulo es igual a dos veces el cuadrado de la mediana relativa al tercer lado más la mitad del cuadrado del tercer lado, (veáse Fig. \ref{199}).
\begin{figure}[ht!]
\begin{center}
\includegraphics[scale=0.3]{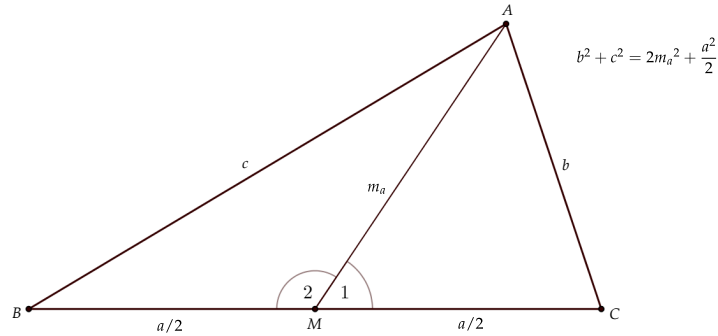}\\
\caption{Teorema de la mediana}\label{199}
\end{center}
\end{figure}
\end{teor}
\begin{proof}
Para obtener este resultado vamos a aplicar dos veces la ley de Cosenos.\\[.3cm] En el triángulo $\triangle AMC$,
\begin{equation}\label{100}
b^2=\dfrac{a^2}{4}+m_a^2-2\cdot\dfrac{a}{2}\cdot m_a\cdot\cos\widehat{M_1}
\end{equation}
En el triángulo $\triangle AMB$,
\begin{equation}\label{101}
c^2=\dfrac{a^2}{4}+m_a^2-2\cdot\dfrac{a}{2}\cdot m_a\cdot\cos\widehat{M_2}
\end{equation}
Sumando y teniendo en cuenta que $\cos\widehat{M_2}=-\cos\widehat{M_1}$,
\begin{align*}
b^2+c^2=\dfrac{2a^2}{4}+2m_a^2\\
\shortintertext{O sea:}
b^2+c^2=2m_a^2+\dfrac{a^2}{2}
\end{align*}
\end{proof}
\noindent Y ahora sí, estudiemos el segundo lugar geométrico.
\begin{prop}[\textbf{Circunferencia de centro en el punto medio de $\overline{AB}$}]
Sea $\overline{AB}$ un segmento dado y $O$ su punto medio. El lugar de los puntos $M$ para los cuales ${MA}^2+{MB}^2=K^2$ donde $K^2$ es una constante dada es una circunferencia de centro en $O$, ($O$ punto medio) de $\overline{AB}$ (véase Fig. \ref{200}).
\end{prop}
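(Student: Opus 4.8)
The plan is to apply the median theorem (Teorema de la mediana) to the triangle $\triangle MAB$, taking $\overline{AB}$ as the ``third side''. Since $O$ is the midpoint of $\overline{AB}$, the segment $\overline{MO}$ is exactly the median relative to that side, so the theorem yields
\[
MA^2 + MB^2 = 2\,MO^2 + \frac{AB^2}{2}.
\]
This identity turns the defining condition $MA^2 + MB^2 = K^2$ into
\[
MO^2 = \frac{K^2}{2} - \frac{AB^2}{4} = \frac{2K^2 - AB^2}{4},
\]
whose right-hand side is a fixed number depending only on the given data $K$ and $AB$. Hence $M$ satisfies the original relation if and only if $MO$ equals the constant $\tfrac12\sqrt{2K^2 - AB^2}$, i.e.\ if and only if $M$ lies on the circle of center $O$ and that radius.

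I would then record the two inclusions separately, in the same style as the Apollonius proposition. For the direct inclusion, a point $M$ with $MA^2+MB^2=K^2$ is fed into the median identity and produces $MO = \tfrac12\sqrt{2K^2-AB^2}$, so $M$ is on the circle. For the reverse inclusion, a point $M$ on that circle has $MO^2 = \tfrac{2K^2-AB^2}{4}$; substituting back into the same identity recovers $MA^2+MB^2 = 2\cdot\tfrac{2K^2-AB^2}{4} + \tfrac{AB^2}{2} = K^2$. Thus the two sets coincide, and the radius comes out for free.

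The only genuine gap is that the median theorem, as stated, presupposes a non-degenerate triangle, whereas $M$ may lie on the line $\overset{\longleftrightarrow}{AB}$. I would dispose of this case by a direct computation: placing $O$ at the origin with $A=(-a,0)$, $B=(a,0)$ (so $AB=2a$) and $M=(x,0)$, one checks $MA^2+MB^2=(x+a)^2+(x-a)^2=2x^2+2a^2=2\,MO^2+\tfrac{AB^2}{2}$, so the identity persists; alternatively one can run the entire argument analytically in these coordinates, where $MA^2+MB^2=2(x^2+y^2)+2a^2$ and the locus $x^2+y^2=\tfrac{K^2}{2}-a^2$ is manifestly a circle centered at $O$. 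I expect the main (minor) obstacle to be bookkeeping about when the locus is nonempty: it is a bona fide circle when $2K^2>AB^2$, it degenerates to the single point $O$ when $2K^2=AB^2$, and it is empty when $2K^2<AB^2$; these cases deserve a remark, even though the proposition tacitly assumes $K$ is large enough for the circle to be proper.
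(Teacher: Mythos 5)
Your proof is correct and follows essentially the same route as the paper: both inclusions are obtained by feeding the condition $MA^2+MB^2=K^2$ into the median identity $MA^2+MB^2=2\,MO^2+\tfrac{AB^2}{2}$ for $\triangle MAB$, yielding the circle of center $O$ and radius $\sqrt{\tfrac{K^2}{2}-\tfrac{AB^2}{4}}$. Your closing remarks on the degenerate cases ($2K^2=AB^2$ giving the single point $O$, $2K^2<AB^2$ giving the empty locus) correspond to the paper's subsequent \emph{Discusi\'on}, so nothing essential is missing.
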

\begin{figure}[ht!]
\begin{center}
\includegraphics[scale=0.3]{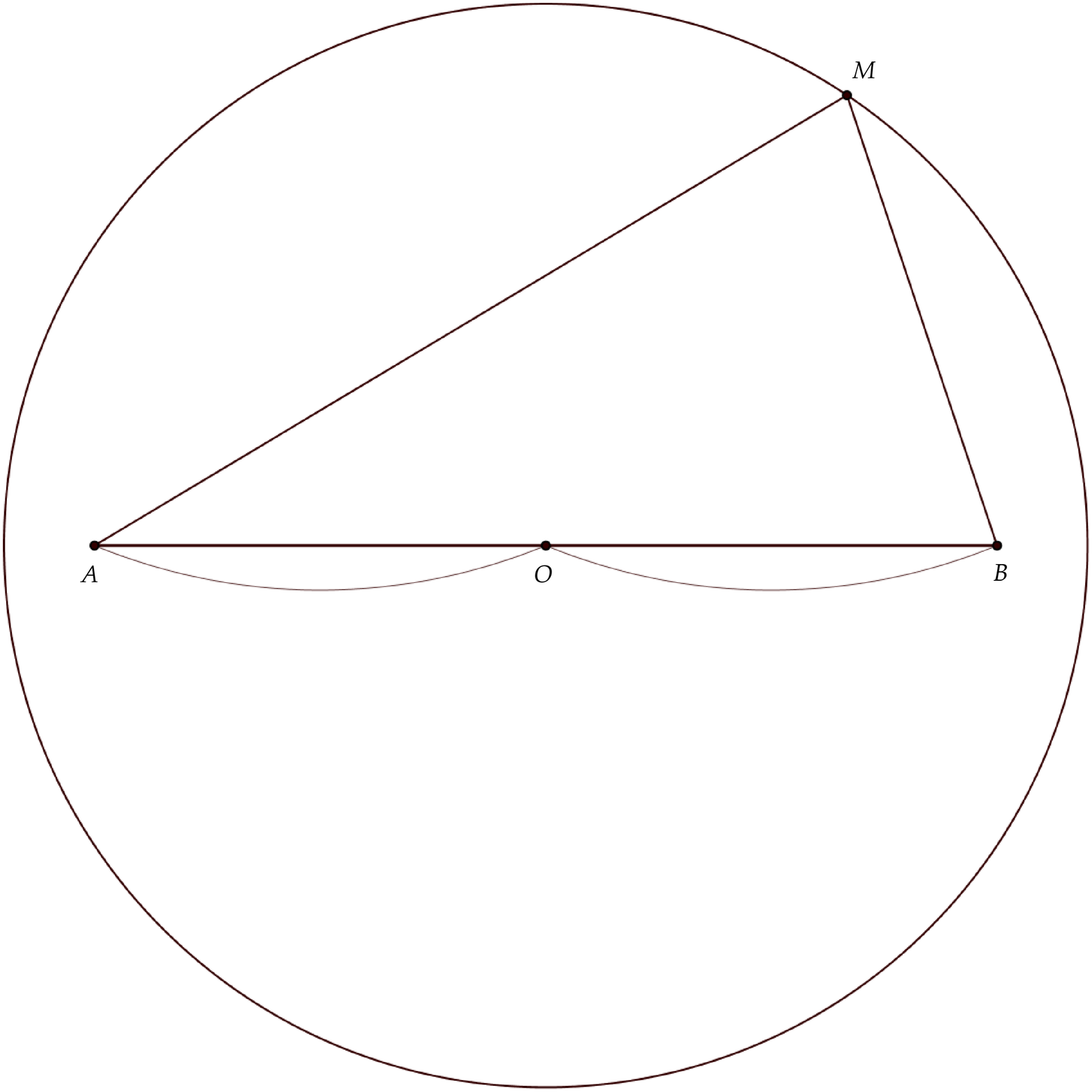}\\
\caption{}\label{200}
\end{center}
\end{figure}
\begin{proof}
\noindent
\begin{enumerate}
\item[$\bullet$] Sea $M$ un punto del lugar. Unimos $M$ con $A$ y $B$.\\[.3cm] Entonces,
\begin{equation}\label{176}
{MA}^2+{MB}^2=k^2
\end{equation}
Unimos $M$ con $O$: punto medio de $\overline{AB}$.\\[.3cm] Por el Teorema de la mediana en el triángulo $\triangle MAB$,
\begin{equation}\label{177}
{MA}^2+{MB}^2=2\cdot{OM}^2+\dfrac{{AB}^2}{2}
\end{equation}
De \eqref{176} y \eqref{177}:
\begin{align*}
2\cdot{OM}^2+\dfrac{{AB}^2}{2}&=k^2\\
\therefore\quad{OM}^2&=\dfrac{k^2}{2}-\dfrac{{AB}^2}{4}:\text{constante}
\end{align*}
\item[$\bullet$] Tomemos ahora un punto $M\in\mathscr{C}\left(0;\sqrt{\dfrac{k^2}{2}-\dfrac{{AB}^2}{4}}\right)$ y veamos que $M$ cumple la propiedad, o sea que ${MA}^2+{MB}^2=k^2$. Unimos $M$ con $O$.\\[.3cm] Como:
\begin{align}
&M\in\mathscr{C}\left(0;\sqrt{\dfrac{k^2}{2}-\dfrac{{AB}^2}{4}}\right),\notag\\
&OM=\sqrt{\dfrac{k^2}{2}-\dfrac{{AB}^2}{4}}\notag\\
\shortintertext{O sea que,}
&{OM}^2=\dfrac{k^2}{2}-\dfrac{{AB}^2}{4}\label{178}
\end{align}
\noindent Por el Teorema de la mediana,
\begin{align*}
{MA}^2+{MB}^2=2\cdot{OM}^2+\dfrac{{AB}^2}{2}&\underset{\downarrow}{=}2\left(\dfrac{k^2}{2}-\dfrac{{AB}^2}{4}\right)+\dfrac{{AB}^2}{2}=k^2,\\
&\eqref{178}
\end{align*}
\end{enumerate}
\end{proof}
\begin{discu}\noindent
\begin{enumerate}
\item[(1)] Habrá lugar si $\dfrac{k^2}{2}-\dfrac{{AB}^2}{4}\geqslant 0$, o sea si $$\dfrac{k^2}{2}\geqslant\dfrac{{AB}^2}{4},\quad\text{i.e.,}\quad\text{si}\quad k\geqslant\dfrac{AB}{\sqrt{2}}=\dfrac{AB\sqrt{2}}{2}\approx 0.707 AB$$
\item[(2)] Si $k=\dfrac{\sqrt{2}}{2}AB$, el lugar se reduce al punto $O$.\\
\item[(3)] En el caso que ${MA}^2+{MB}^2={AB}^2$, o sea, si $k=AB$, el lugar es la circunferencia de diámetro $AB$.
\end{enumerate}
\end{discu}
\noindent Resumiendo, si $k>\dfrac{AB\cdot\sqrt{2}}{2}$, se tiene $\left\{M\diagup{MA}^2+{MB}^2=K^2\right\}=\mathscr{C}(O,R)$ con $R=\left(O;\sqrt{\dfrac{K^2}{2}-\dfrac{{AB}^2}{4}}\right)$ (véase Fig. \ref{201}).
\begin{figure}[ht!]
\begin{center}
\includegraphics[scale=0.3]{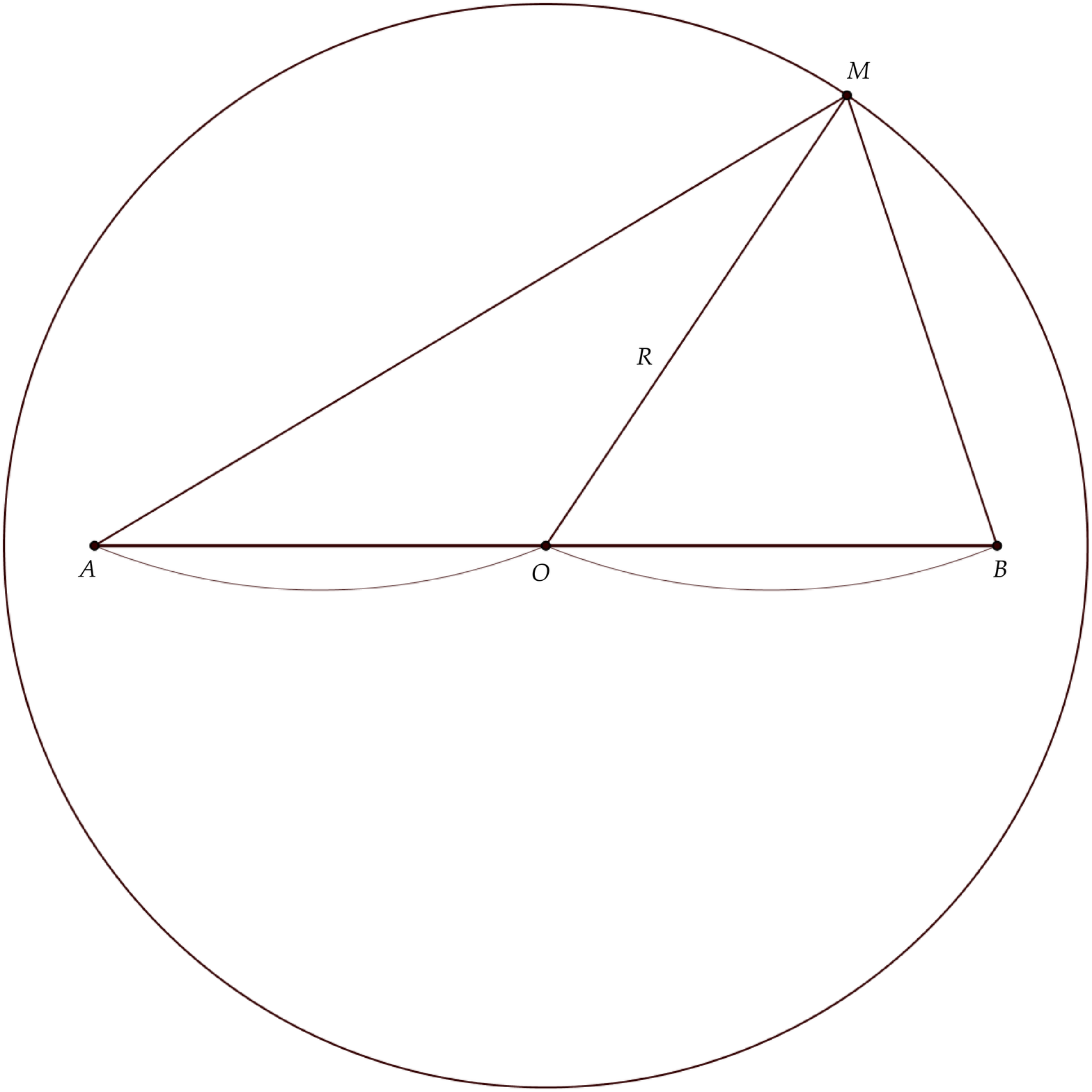}\\
\caption{Circunferencia $\mathscr{C}\left(O,R\right)=\left(O;\sqrt{\dfrac{K^2}{2}-\dfrac{{AB}^2}{4}}\right)=\left\{M\diagup{MA}^2+{MB}^2=K^2\right\}$}\label{201}
\end{center}
\end{figure}
%%%%%%%%%%%%%%%%%%%%%%%%%%%%%%%%%%%%%%%%%%%%%%%%%%%%%%%%%%%%%%%%%%%%%%%%%%%%%%%%%%%%%%%%%%%%%%%%%%%%%%%%%%%%%%%%%%%%%%%%%%%%%%%%%%%%%%%%%%%%%%%%%%%%%%%%%%%%%%%%%%%%%%%%%%%%%%%%%%%%%%%
\subsection{Tercer lugar geométrico: recta perpendicular a $\overline{AB}$}
Para estudiar el tercer lugar geométrico necesitamos el siguiente resultado de la geometría:
\begin{teor}
Se tiene un triángulo $\triangle ABC$ con $c>b$. La diferencia de los cuadrados de dos lados de un triángulo es dos veces el producto del tercer lado por la proyección de la mediana relativa al tercer lado sobre este, (véase Fig. \ref{202}).
\begin{figure}[ht!]
\begin{center}
\includegraphics[scale=0.3]{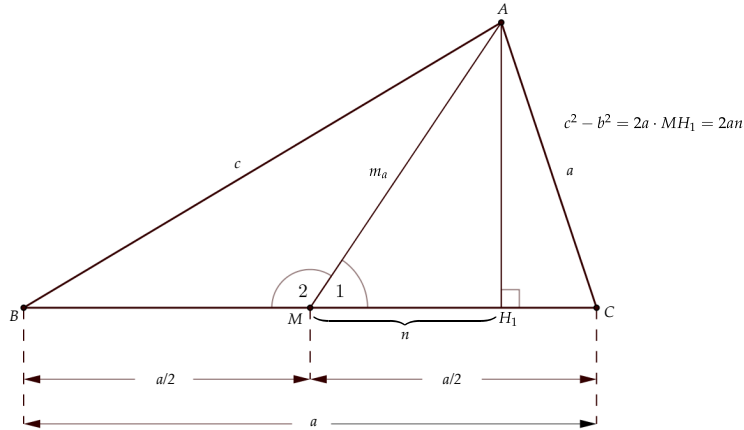}\\
\caption{${c}^2-{b}^2=2a\cdot MH_1=2an$}\label{202}
\end{center}
\end{figure}
\end{teor}
\noindent Demostremos esta afirmación.
\begin{proof}
De nuevo se aplica dos veces la ley de Cosenos.
\begin{align}
\text{En el triángulo $\triangle AMC$,}\quad b^2&=\dfrac{a^2}{4}+{m_a}^2-a\cdot m_a\cos M_1\label{179}\\
\text{En el triángulo $\triangle AMB$,}\quad c^2&=\dfrac{a^2}{4}+{m_a}^2-a\cdot m_a\cos M_2\notag\\
&\underset{\uparrow}{=}\dfrac{a^2}{4}+{m_a}^2+a\cdot m_a\cdot\cos M_1\label{180}\\
&\cos\widehat{M_1}=-\cos\widehat{M_1}\notag\\
\eqref{180}-\eqref{179}:c^2-b^2&=2a\left(m_a\cdot\cos\widehat{M_1}\right)\notag\\
\shortintertext{O sea}
c^2-b^2&=2a\cdot n\notag
\end{align}
\end{proof}
\noindent Y con este resultado consideramos el tercer lugar geométrico.
\begin{prop}[\textbf{recta perpendicular a $\overline{AB}$}]
El lugar de los puntos $M$ desde los cuales la diferencia de los cuadrados de su distancia a dos puntos fijos $A$ y $B$ es constante e igual a $k^2(k^2>0)$ es una recta perpendicular a la recta $\overset{\longleftrightarrow}{AB}$. Si $O$ es el punto medio de $\overline{AB}$ y $D$ es el pie de la perpendicular de dicha recta perpendicular, $OD=\dfrac{K^2}{2\cdot AB}$, (véase Fig. \ref{203}).
\end{prop}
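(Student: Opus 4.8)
The plan is to run the same two-inclusion argument used for the Apollonius circle (and for the second locus with the median theorem), but now leaning on the last Teorema, the one asserting $c^2 - b^2 = 2a\,n$ with $n$ the (signed) projection of the median relative to the third side onto that side. First I fix the midpoint $O$ of $\overline{AB}$ and single out the point $D$ on $\overset{\longleftrightarrow}{AB}$ lying on the $B$-side of $O$ with $OD = \dfrac{k^2}{2\,AB}$; let $\ell$ be the line through $D$ perpendicular to $\overset{\longleftrightarrow}{AB}$. The goal is to show $\bigl\{M \colon MA^2 - MB^2 = k^2\bigr\} = \ell$.

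For the inclusion $\subseteq$, take $M$ with $MA^2 - MB^2 = k^2$; since $k^2 > 0$ this forces $MA > MB$. If $M \notin \overset{\longleftrightarrow}{AB}$, the triangle $\triangle MAB$ is nondegenerate and I apply the Teorema to it with apex $M$: the two sides through $M$ are $MA$ and $MB$ with $MA > MB$, the third side is $\overline{AB}$, the median relative to it is $\overline{MO}$, and its projection onto $\overset{\longleftrightarrow}{AB}$ is $\overline{OH}$, where $H$ is the foot of the perpendicular from $M$ to $\overset{\longleftrightarrow}{AB}$. The Teorema gives $MA^2 - MB^2 = 2\,AB \cdot OH$, so $OH = \dfrac{k^2}{2\,AB} = OD$; since $MA > MB$ forces $H$ onto the $B$-side of $O$, we get $H = D$, hence $M \in \ell$. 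The remaining case $M \in \overset{\longleftrightarrow}{AB}$ is disposed of by a one-line signed-length computation: with $A$ at the origin and $B = (AB,0)$, writing $M = (x,0)$ gives $MA^2 - MB^2 = AB(2x - AB) = k^2$, i.e. $x = \tfrac{AB}{2} + \tfrac{k^2}{2\,AB}$, which is exactly the abscissa of $D$, so $M = D \in \ell$.

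For the inclusion $\supseteq$, take $M \in \ell$. If $M = D$, the computation just made shows $DA^2 - DB^2 = k^2$. If $M \neq D$, then $D$ is precisely the foot of the perpendicular from $M$ to $\overset{\longleftrightarrow}{AB}$, so in the nondegenerate triangle $\triangle MAB$ the projection of the median $\overline{MO}$ onto $\overset{\longleftrightarrow}{AB}$ is $\overline{OD}$ with $OD = \dfrac{k^2}{2\,AB}$, and the Teorema yields $MA^2 - MB^2 = 2\,AB \cdot OD = k^2$. Combining the two inclusions shows the locus is the line $\ell$, and the value $OD = \dfrac{k^2}{2\,AB}$ is part of its description.

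The only delicate point — the hard part, such as it is — is the bookkeeping of orientation: I must check that the projection of the median $\overline{MO}$ runs from $O$ toward $B$ rather than toward $A$, which is exactly where the hypothesis $MA > MB$ (equivalently $k^2 > 0$) gets used, and I must invoke the Teorema with the correct matching of vertices, its apex playing the role of the moving point $M$ and its inequality $c > b$ becoming $MA > MB$. Once these signs are pinned down everything else is routine, and no genuinely difficult step remains.
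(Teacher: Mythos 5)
Tu demostración es correcta y sigue esencialmente el mismo camino que la del texto: las dos inclusiones, apoyadas ambas en el Teorema que da $c^2-b^2=2a\cdot n$ aplicado al triángulo $\triangle MAB$ con mediana $\overline{MO}$ y proyección $\overline{OD}$. El único añadido tuyo es el cuidado explícito con la orientación (que $D$ queda del lado de $B$ porque $MA>MB$) y con el caso degenerado $M\in\overset{\longleftrightarrow}{AB}$, detalles que el texto omite pero que no cambian la estructura del argumento.
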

O sea:
\begin{figure}[ht!]
\begin{center}
\includegraphics[scale=0.3]{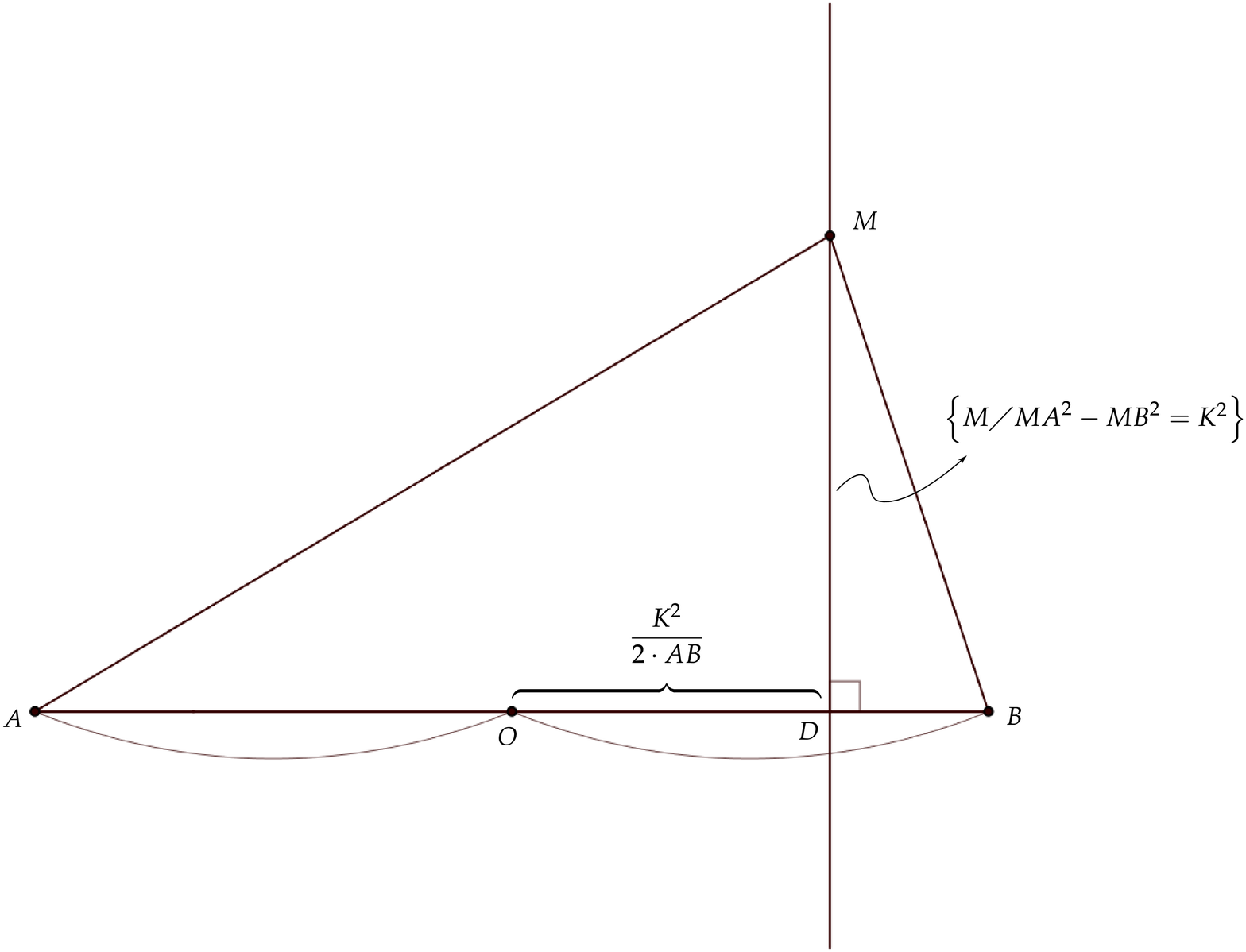}\\
\caption{}\label{203}
\end{center}
\end{figure}
\begin{proof}
\noindent
\begin{enumerate}
\item[$\bullet$] Sea $M$ un punto del lugar.\\[.3cm] Entonces ${MA}^2-{MB}^2=k^2$. Unimos $M$ con $O$ y bajamos desde $M, \overline{MD}\perp\overline{AB}$.\\[.3cm] $\overline{OD}$: proyección de la mediana $\overline{OM}$ del triángulo $\triangle ABC$ sobre $\overline{AB}$.\\[.3cm] Entonces,
\begin{align*}
{MA}^2-{MB}^2&=2\cdot AB\cdot OD\\
\shortintertext{O sea:}
2AB\cdot OD&=k^2\quad\therefore\quad OD=\dfrac{k^2}{2\cdot AB},
\end{align*}
lo que indica que $M$ está sobre la perpendicular por $D$ a la recta $\overset{\longleftrightarrow}{AB}$.\\
\item[$\bullet$] Supongamos ahora que tomamos un punto $M$ en la recta $\ell\perp$ a $\overset{\longleftrightarrow}{AB}$ por $D$ donde $OD=\dfrac{k^2}{2\cdot AB}$, (véase Fig. \ref{204}).\\[.3cm]
\begin{figure}[ht!]
\begin{center}
\includegraphics[scale=0.3]{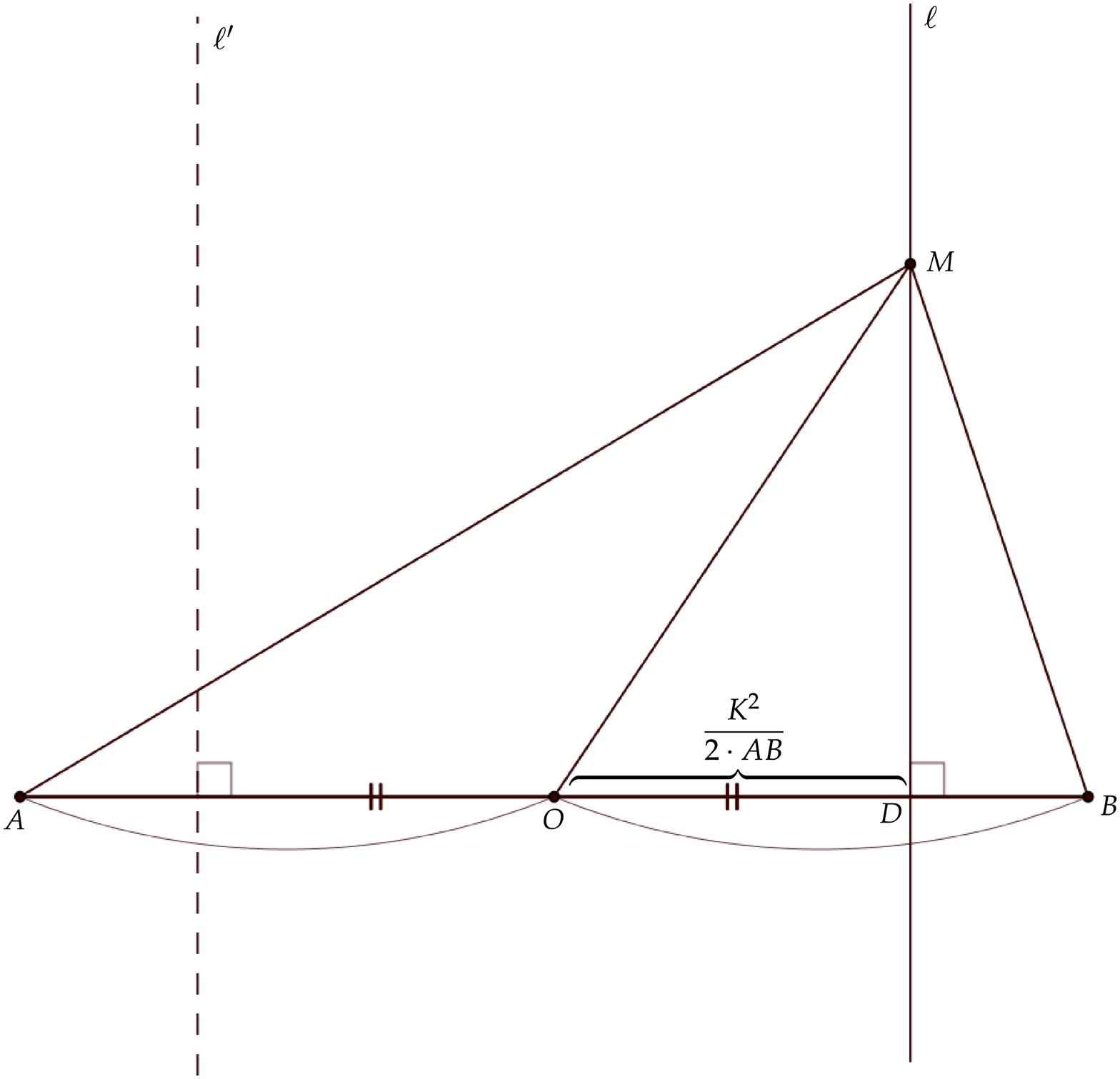}\\
\caption{}\label{204}
\end{center}
\end{figure}
\newline
Veamos que $M$ satisface la propiedad, o sea, veamos que ${MA}^2-{MB}^2=k^2$.\\[.3cm] Unimos $M$ con $O, A$ y $B$.\\ En el triángulo $\triangle MAB, \overline{MO}$ es mediana y $\overline{OD}$ es la proyección de $\overline{MO}$ sobre $\overset{\longleftrightarrow}{AB}$.\\[.3cm] Luego,
\begin{align*}
{MA}^2-{MB}^2&=2\cdot AB\cdot OD\\
&\underset{\uparrow}{=}2\cdot AB\cdot\dfrac{k^2}{2\cdot AB}=k^2,\\
&OD=\dfrac{k^2}{2\cdot AB}
\end{align*}
\end{enumerate}
\end{proof}
\begin{obser}
\begin{enumerate}\noindent
\item[(i)] El lugar de los puntos $M$ tal que ${MB}^2-{MA}^2=k^2$ es la recta $\ell'$ simétrica de la recta $\ell$ con respecto a $O$.\\
\item[(ii)] Si $k^2={AB}^2$, o sea, si $k=AB$, el lugar es la recta perpendicular por $B\cdot\overline{AB}$, (véase Fig. \ref{205}).
\begin{figure}[ht!]
\begin{center}
\includegraphics[scale=0.3]{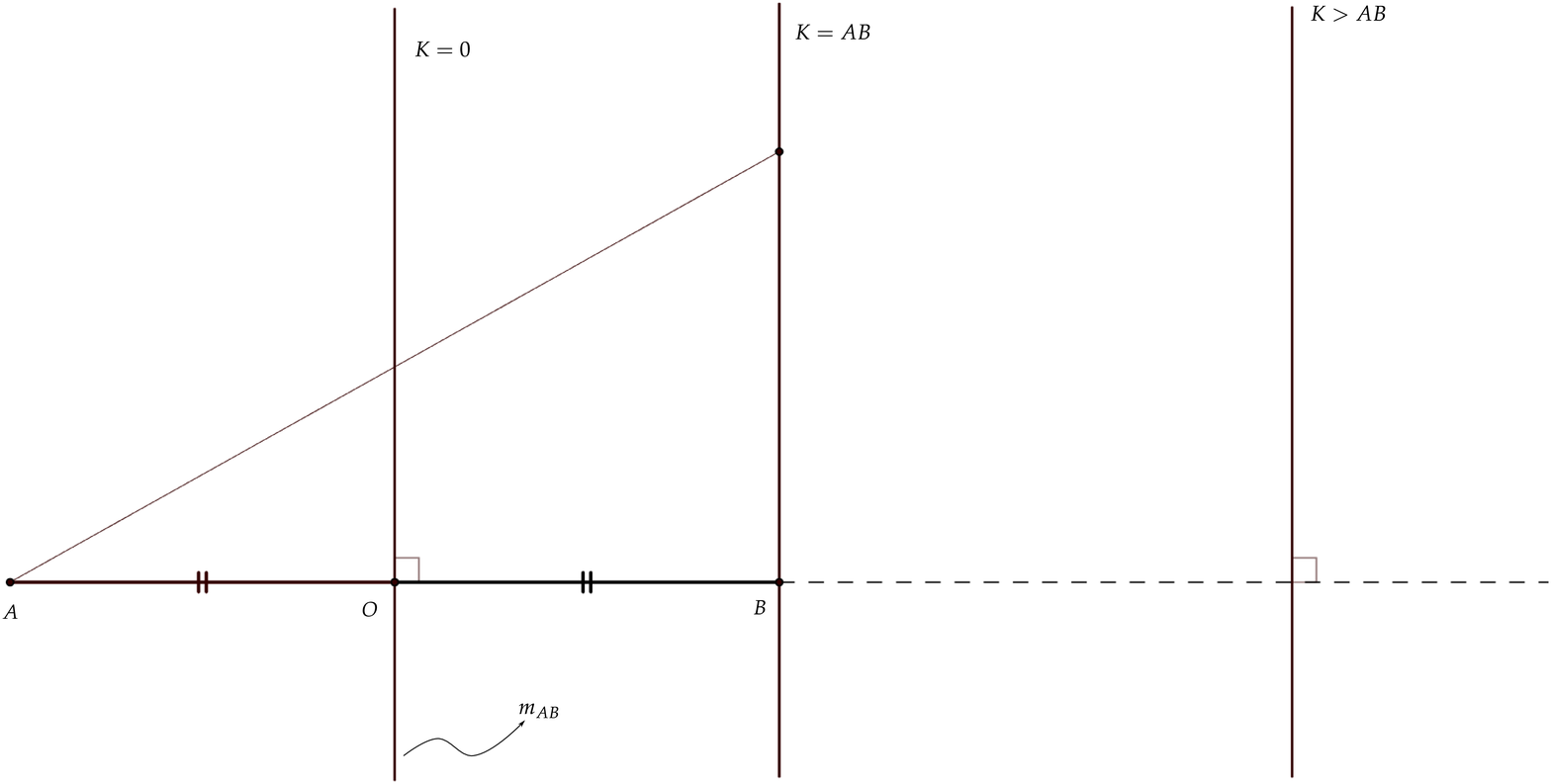}\\
\caption{}\label{205}
\end{center}
\end{figure}
\end{enumerate}
\end{obser}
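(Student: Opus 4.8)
La idea es deducir ambas afirmaciones directamente de la Proposici\'{o}n anterior y de la f\'{o}rmula $OD=\dfrac{k^2}{2\cdot AB}$ probada en ella, sin rehacer ning\'{u}n c\'{o}mputo. Para (i), aplicar\'{i}a esa misma Proposici\'{o}n intercambiando los papeles de $A$ y $B$: la condici\'{o}n ${MB}^2-{MA}^2=k^2$ es, palabra por palabra, la que define el lugar de la Proposici\'{o}n despu\'{e}s del cambio $A\leftrightarrow B$. Como $\overset{\longleftrightarrow}{BA}=\overset{\longleftrightarrow}{AB}$ y $O$ sigue siendo el punto medio, se obtiene una recta perpendicular a $\overset{\longleftrightarrow}{AB}$ cuyo pie $D'$ satisface $OD'=\dfrac{k^2}{2\cdot BA}=\dfrac{k^2}{2\cdot AB}=OD$, pero medido desde $O$ hacia $A$ en lugar de hacia $B$; luego $D'$ es el sim\'{e}trico de $D$ respecto de $O$ y $\ell'$, la perpendicular a $\overset{\longleftrightarrow}{AB}$ por $D'$, es la sim\'{e}trica de $\ell$ respecto de $O$. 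Lo mismo se ve con la simetr\'{i}a central $\sigma_O$ de centro $O$: es una isometr\'{i}a que intercambia $A$ y $B$, de modo que $\sigma_O(P)A=PB$ y $\sigma_O(P)B=PA$ para todo $P$; escribiendo $M'=\sigma_O(M)$, el punto $M$ cumple ${MB}^2-{MA}^2=k^2$ si y s\'{o}lo si ${M'A}^2-{M'B}^2=k^2$, esto es, si y s\'{o}lo si $M'\in\ell$, o sea $M\in\sigma_O(\ell)=\ell'$.

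Para (ii), sustituir\'{i}a $k^2={AB}^2$ en $OD=\dfrac{k^2}{2\cdot AB}$, lo que da $OD=\dfrac{AB}{2}$. Como $O$ es el punto medio de $\overline{AB}$, tambi\'{e}n $OB=\dfrac{AB}{2}$; y $D$ queda del mismo lado de $O$ que $B$, porque sobre el lugar vale ${MA}^2>{MB}^2$, as\'{i} que tanto $M$ como su proyecci\'{o}n ortogonal $D$ sobre $\overset{\longleftrightarrow}{AB}$ caen del lado de $B$ respecto de la mediatriz de $\overline{AB}$. Por consiguiente $D=B$, y el lugar es la recta perpendicular a $\overset{\longleftrightarrow}{AB}$ trazada por $B$.

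El \'{u}nico punto que conviene vigilar es el del lado u orientaci\'{o}n del pie $D$: hay que asegurarse de que $D$ cae del lado de $B$ respecto de $O$. Esto se desprende de que a lo largo del lugar es ${MA}^2-{MB}^2=k^2>0$, luego $MA>MB$, y de que el pie de la perpendicular bajada desde semejante $M$ queda estrictamente del lado de $B$ de la mediatriz de $\overline{AB}$, hecho ya impl\'{i}cito en la prueba de la Proposici\'{o}n a trav\'{e}s del teorema de la proyecci\'{o}n de la mediana. Una vez fijado ese detalle de signos, las dos partes son inmediatas.
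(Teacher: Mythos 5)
Tu propuesta es correcta. Conviene se\~{n}alar que el art\'{i}culo enuncia esta Observaci\'{o}n sin demostraci\'{o}n, de modo que no hay prueba del texto con la cual compararla; tu deducci\'{o}n a partir de la Proposici\'{o}n anterior es exactamente lo que hac\'{i}a falta. Para (i), tanto el intercambio $A\leftrightarrow B$ como el argumento con la simetr\'{i}a central $\sigma_O$ (que, por ser isometr\'{i}a con $\sigma_O(A)=B$ y $\sigma_O(B)=A$, da $d(\sigma_O(M),A)=MB$ y $d(\sigma_O(M),B)=MA$) son v\'{a}lidos y muestran que el nuevo lugar es $\sigma_O(\ell)=\ell'$. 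Para (ii), la sustituci\'{o}n $k^2={AB}^2$ en $OD=\dfrac{k^2}{2\cdot AB}$ da $OD=\dfrac{AB}{2}=OB$, y tu verificaci\'{o}n de que $D$ cae del lado de $B$ (pues de ${MA}^2-{MB}^2={DA}^2-{DB}^2$ se sigue que $MA>MB$ implica $DA>DB$) es precisamente el detalle de signo que la Proposici\'{o}n, al tratar $OD$ como longitud sin signo, deja impl\'{i}cito; con \'{e}l se concluye $D=B$ y el lugar es la perpendicular a $\overline{AB}$ por $B$. No veo lagunas en el argumento.
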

\noindent Muy relacionado con lo que se viene discutiendo es el siguiente
\begin{pro}\label{208}
Se tiene un triángulo $\triangle ABC$. Llamemos $G$ al baricentro del triángulo (punto donde concurren las tres medianas del triángulo).\\[.3cm] Demostrar que:
\begin{enumerate}
\item[(1)] ${GA}^2+{BG}^2+{GC}^2=\dfrac{1}{3}(a^2+b^2+c^2)$\\
\item[(2)] $\forall X$ del plano:
\begin{align*}
{XA}^2+{XB}^2+{XC}^2&=\left({GA}^2+{GB}^2+{GC}^2\right)+3{XG}^2\\
&=\dfrac{1}{3}(a^2+b^2+c^2)+3{XG}^2\quad\text{(veáse Fig. \ref{206})}
\end{align*}
\end{enumerate}
\begin{figure}[ht!]
\begin{center}
\includegraphics[scale=0.3]{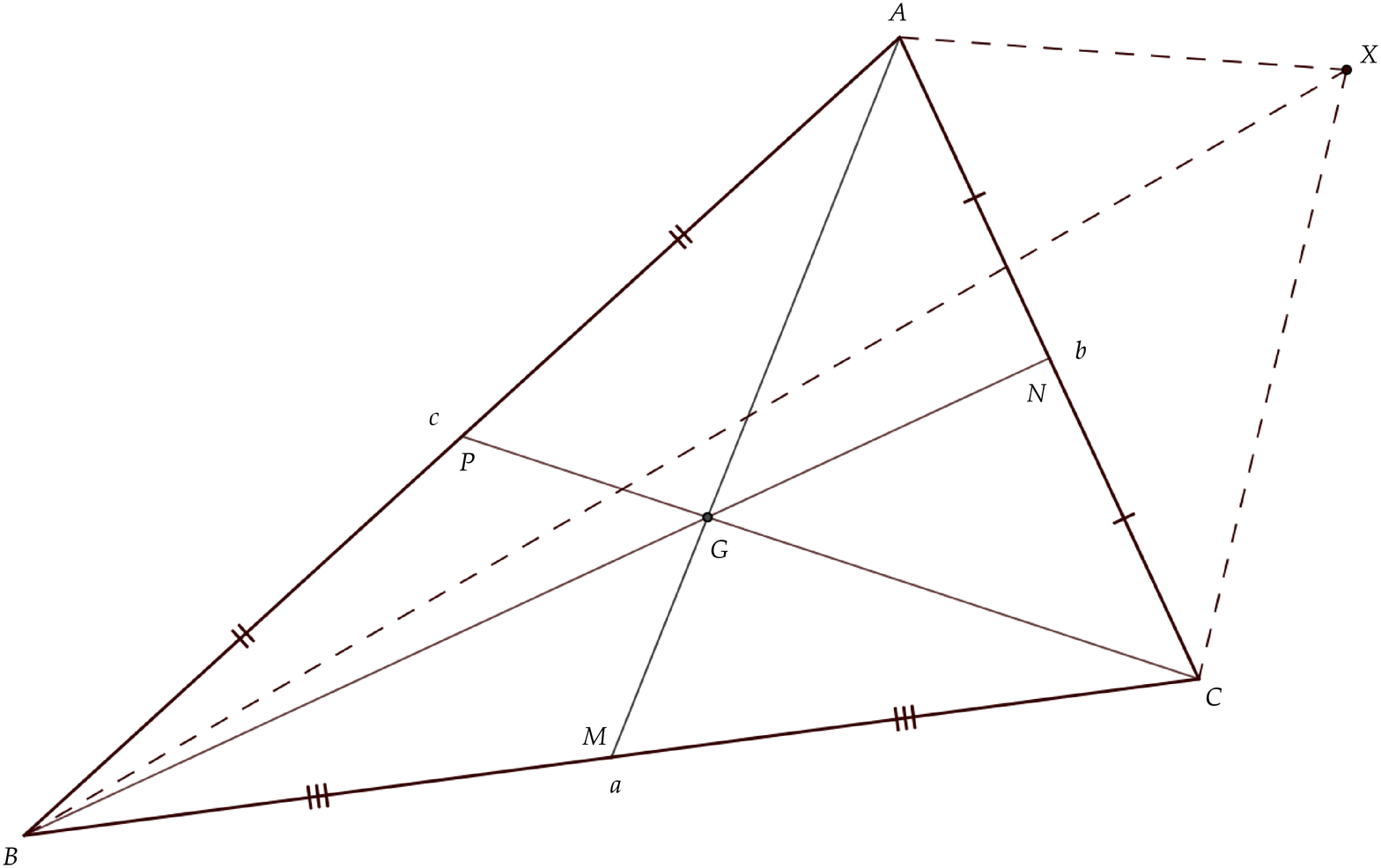}\\
\caption{}\label{206}
\end{center}
\end{figure}
\noindent En particular se tiene que si $X$ se toma en $G, XG=0$ y
\begin{align*}
\min\left\{{XA}^2+{XB}^2+{XC}^2\right\}&={GA}^2+{GB}^2+{GC}^2\\
&=\dfrac{1}{3}(a^2+b^2+c^2)
\end{align*}
\noindent y se alcanza en $G$.\\[.3cm]
\end{pro}
Para establecer $(1)$ y $(2)$ primero demostremos que:
\begin{prop}\label{181}
La suma de los cuadrados de las medianas de un triángulo es $\dfrac{3}{4}$ de la suma de los cuadrados de los lados del triángulo, (véase Fig. \ref{207}).\\[.3cm]
\end{prop}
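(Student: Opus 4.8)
El plan es aplicar el \emph{teorema de la mediana} tres veces, una por cada v\'{e}rtice del tri\'{a}ngulo, y sumar las identidades resultantes. Denotemos por $m_a$, $m_b$ y $m_c$ las medianas relativas a los lados $a$, $b$ y $c$, respectivamente. Por el teorema de la mediana aplicado a $m_a$ (mediana relativa al lado $a$) se tiene $b^2+c^2=2m_a^2+\dfrac{a^2}{2}$; de manera an\'{a}loga, tomando como ``tercer lado'' a $b$ y luego a $c$, resulta $a^2+c^2=2m_b^2+\dfrac{b^2}{2}$ y $a^2+b^2=2m_c^2+\dfrac{c^2}{2}$.

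A continuaci\'{o}n sumar\'{i}a las tres igualdades miembro a miembro. El miembro izquierdo queda $2(a^2+b^2+c^2)$, mientras que el derecho queda $2\left(m_a^2+m_b^2+m_c^2\right)+\dfrac{1}{2}(a^2+b^2+c^2)$. Despejando el t\'{e}rmino que contiene las medianas, $2\left(m_a^2+m_b^2+m_c^2\right)=2(a^2+b^2+c^2)-\dfrac{1}{2}(a^2+b^2+c^2)=\dfrac{3}{2}(a^2+b^2+c^2)$, y por lo tanto $m_a^2+m_b^2+m_c^2=\dfrac{3}{4}(a^2+b^2+c^2)$, que es lo que se quer\'{i}a probar.

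No anticipo ning\'{u}n obst\'{a}culo de fondo: lo \'{u}nico que hay que cuidar es identificar correctamente, en cada una de las tres aplicaciones del teorema de la mediana, cu\'{a}l lado desempe\~{n}a el papel del ``tercer lado'', y no equivocarse con la aritm\'{e}tica de las fracciones al sumar. Una vez hecho esto el resultado es inmediato; observemos adem\'{a}s que esta identidad es precisamente la que se requerir\'{a} despu\'{e}s para establecer las partes $(1)$ y $(2)$ del Problema \ref{208}.
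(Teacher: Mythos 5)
Tu propuesta es correcta y coincide en lo esencial con la demostración del texto: se aplica el teorema de la mediana a cada una de las tres medianas, se suman las tres identidades y se despeja $m_a^2+m_b^2+m_c^2=\dfrac{3}{4}(a^2+b^2+c^2)$. La aritmética está bien y no hay nada que corregir.
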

\begin{proof}
Se debe probar que: $${m_a}^2+{m_b}^2+{m_c}^2=\dfrac{3}{4}(a^2+b^2+c^2)$$ Por el Teorema de la mediana:
\begin{align*}
b^2+c^2&=2{m_a}^2+\dfrac{a^2}{2}\\
a^2+c^2&=2{m_b}^2+\dfrac{b^2}{2}\\
a^2+b^2&=2{m_2}^2+\dfrac{c^2}{2}\\
\shortintertext{Luego,}
2(a^2+b^2+c^2)&=2({m_a}^2+{m_b}^2+{m_c}^2)+\dfrac{a^2+b^2+c^2}{2}\\
3(a^2+b^2+c^2)&=4({m_a}^2+{m_b}^2+{m_c}^2)\\
\therefore\quad{m_a}^2+{m_b}^2+{m_c}^2&=\dfrac{3}{4}(a^2+b^2+c^2)
\end{align*}
\begin{figure}[ht!]
\begin{center}
\includegraphics[scale=0.3]{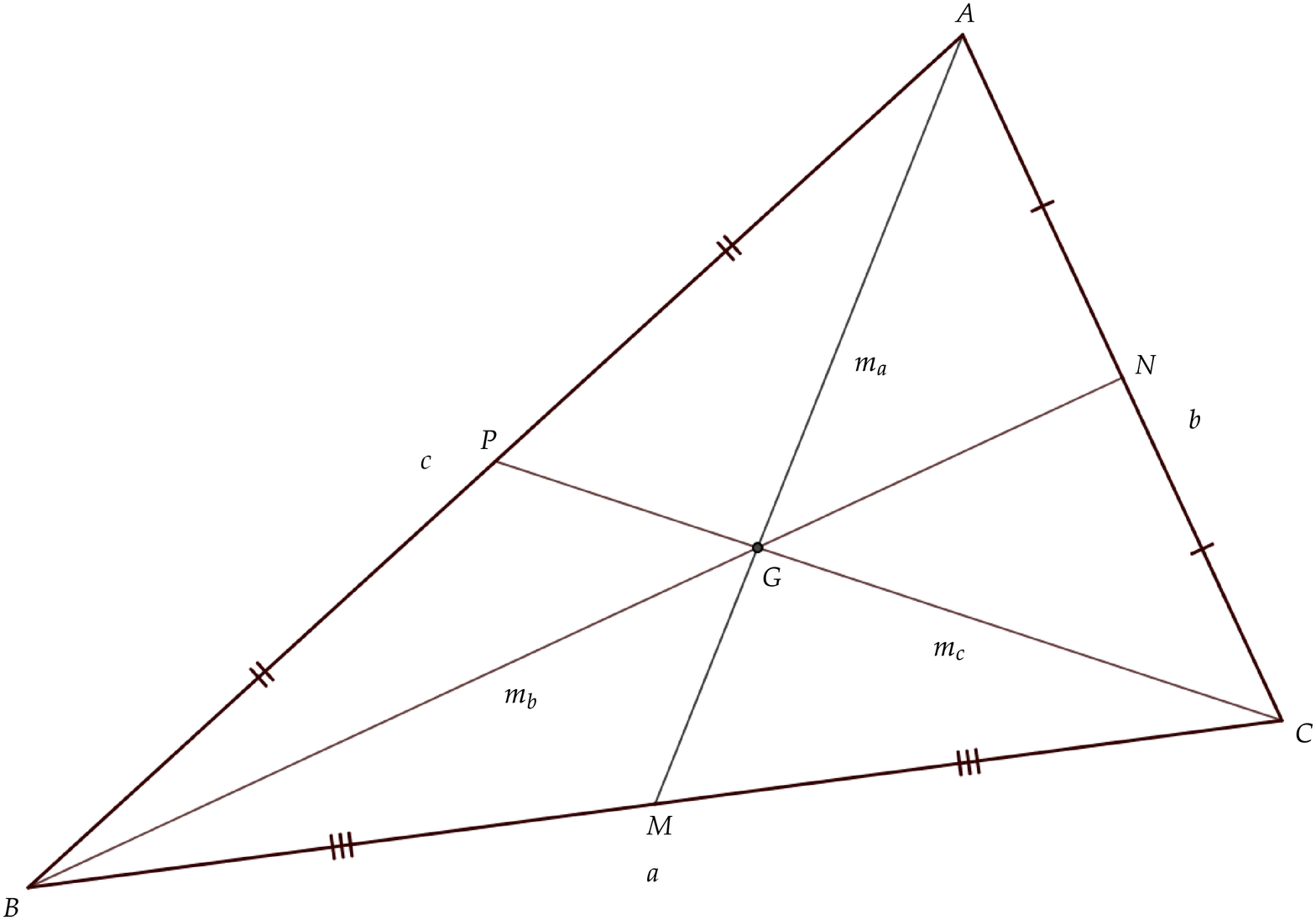}\\
\caption{}\label{207}
\end{center}
\vspace{0.5cm}
\end{figure}
\end{proof}
\begin{corol}
La suma de los cuadrados de las distancias al centroide (o baricentro) de un triángulo a los vértices es $\dfrac{1}{3}$ de la suma de los cuadrados de los lados.
\end{corol}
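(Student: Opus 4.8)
El plan es deducir la igualdad $GA^2+GB^2+GC^2=\tfrac{1}{3}(a^2+b^2+c^2)$ directamente de la Proposici\'{o}n \ref{181}, apoy\'{a}ndome en la propiedad cl\'{a}sica de que el baricentro $G$ divide a cada mediana en raz\'{o}n $2:1$ contada desde el v\'{e}rtice correspondiente. Observo de paso que con esto queda resuelta tambi\'{e}n la parte $(1)$ del Problema \ref{208}.

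Primero recordar\'{i}a que, si $M_a$, $M_b$, $M_c$ son los puntos medios de los lados opuestos a $A$, $B$, $C$, las medianas $\overline{AM_a}$, $\overline{BM_b}$, $\overline{CM_c}$ tienen longitudes $m_a$, $m_b$, $m_c$ y que $G$ est\'{a} sobre cada una de ellas con $GA=\tfrac{2}{3}m_a$, $GB=\tfrac{2}{3}m_b$ y $GC=\tfrac{2}{3}m_c$. Elevando al cuadrado y sumando obtendr\'{i}a
\[
GA^2+GB^2+GC^2=\tfrac{4}{9}\left(m_a^2+m_b^2+m_c^2\right),
\]
y enseguida aplicar\'{i}a la Proposici\'{o}n \ref{181}, que afirma $m_a^2+m_b^2+m_c^2=\tfrac{3}{4}(a^2+b^2+c^2)$, para concluir
\[
GA^2+GB^2+GC^2=\tfrac{4}{9}\cdot\tfrac{3}{4}(a^2+b^2+c^2)=\tfrac{1}{3}(a^2+b^2+c^2).
\]

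El \'{u}nico paso que no es mero c\'{a}lculo rutinario —y que en rigor no es un obst\'{a}culo serio, sino un lugar com\'{u}n de la geometr\'{i}a elemental— es justificar que $G$ corta a cada mediana en raz\'{o}n $2:1$. Si se quisiera que el texto quedara autocontenido, bastar\'{i}a un breve argumento vectorial: tomando $G=\tfrac{1}{3}(A+B+C)$ y $M_a=\tfrac{1}{2}(B+C)$ se comprueba que $G=A+\tfrac{2}{3}(M_a-A)$, de donde $G$ pertenece al segmento $\overline{AM_a}$ con $AG:GM_a=2:1$, y an\'{a}logamente para las otras dos medianas; esto reobtiene adem\'{a}s la concurrencia de las tres medianas en $G$.
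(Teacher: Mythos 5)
Tu propuesta es correcta y coincide en esencia con la demostraci\'{o}n del texto: ambas parten de $GA=\tfrac{2}{3}m_a$, $GB=\tfrac{2}{3}m_b$, $GC=\tfrac{2}{3}m_c$, elevan al cuadrado, suman y aplican la Proposici\'{o}n \ref{181}. El comentario vectorial adicional sobre la raz\'{o}n $2:1$ es un complemento v\'{a}lido pero no cambia el argumento.
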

\begin{proof}
En efecto, como:
\begin{align*}
GA&=\dfrac{2}{3}m_a\\
GB&=\dfrac{2}{3}m_b\\
\shortintertext{y}
GC&=\dfrac{2}{3}m_c\\
{GA}^2+{GB}^2+{GC}^2&=\dfrac{4}{9}({m_a}^2+{m_b}^2+{m_c}^2)\\
&\underset{\uparrow}{=}\dfrac{4}{9}\cdot\dfrac{3}{4}(a^2+b^2+c^2)\\
&\text{Prop. \eqref{181}}\\
&=\dfrac{1}{3}(a^2+b^2+c^2)
\end{align*}
\end{proof}
Ahora si demostremos la segunda parte del problema \eqref{208}:\\[.3cm]<<$\forall X$\, del plano: ${XA}^2+{XB}^2+{XC}^2=\underset{\text{cte}}{\underbrace{({GA}^2+{GB}^2+{GC}^2)}}+3{XG}^2$>>, (véase Fig. \ref{209}).
\begin{figure}[ht!]
\begin{center}
\includegraphics[scale=0.3]{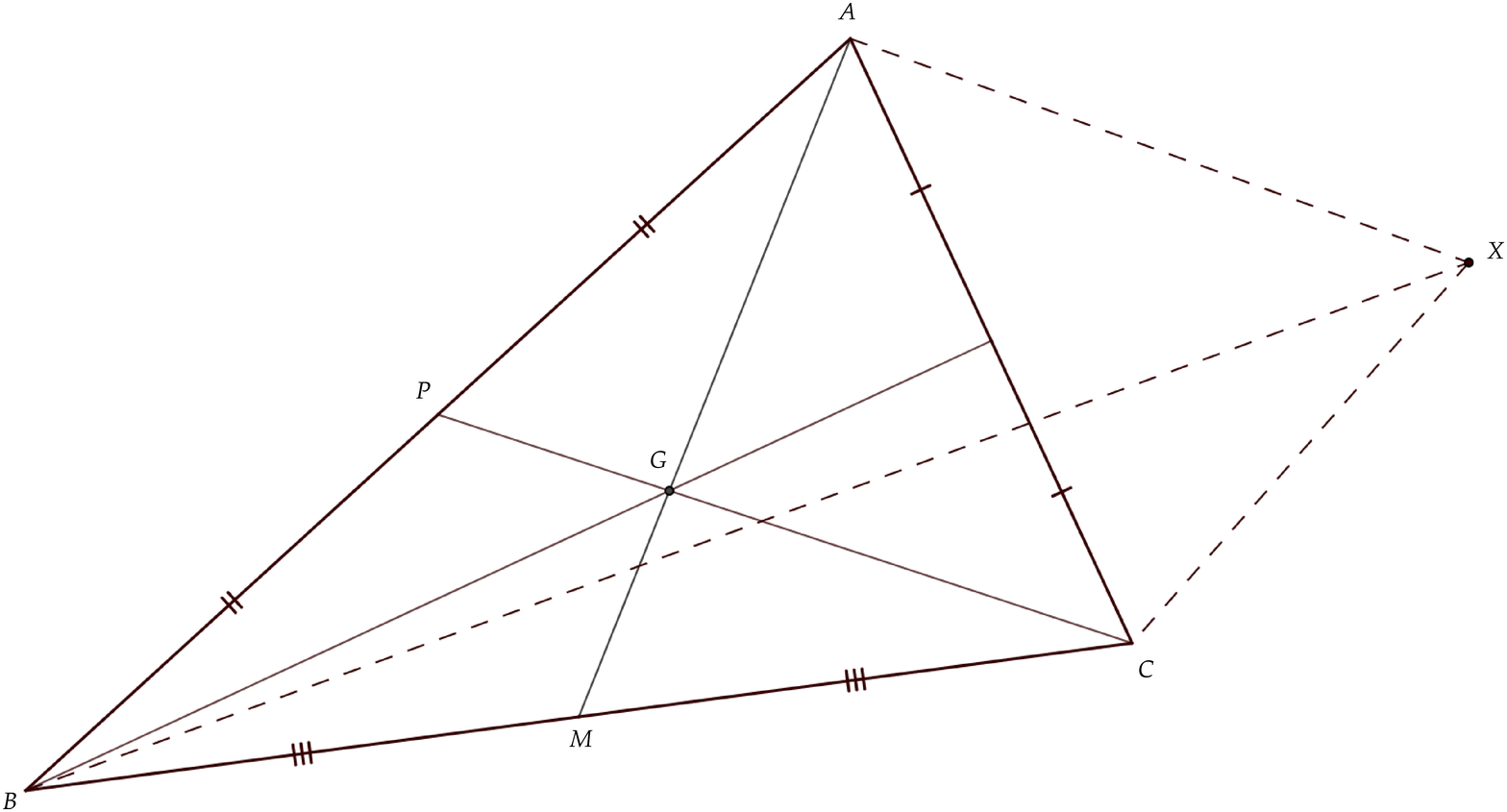}\\
\caption{}\label{209}
\end{center}
\end{figure}
\begin{proof}
\noindent
\begin{enumerate}
\item[$\bullet$] Consideremos la mediana $\overline{AM}$.\\[.3cm] Sea $D$ el punto medio de $\overline{AG}$, (véase Fig. \ref{210}).\\[.3cm] Por el Teorema de la mediana en el triángulo: $$\triangle XBC: {XB}^2+{XC}^2=2{XM}^2+\dfrac{a^2}{2}$$
\begin{figure}[ht!]
\begin{center}
\includegraphics[scale=0.3]{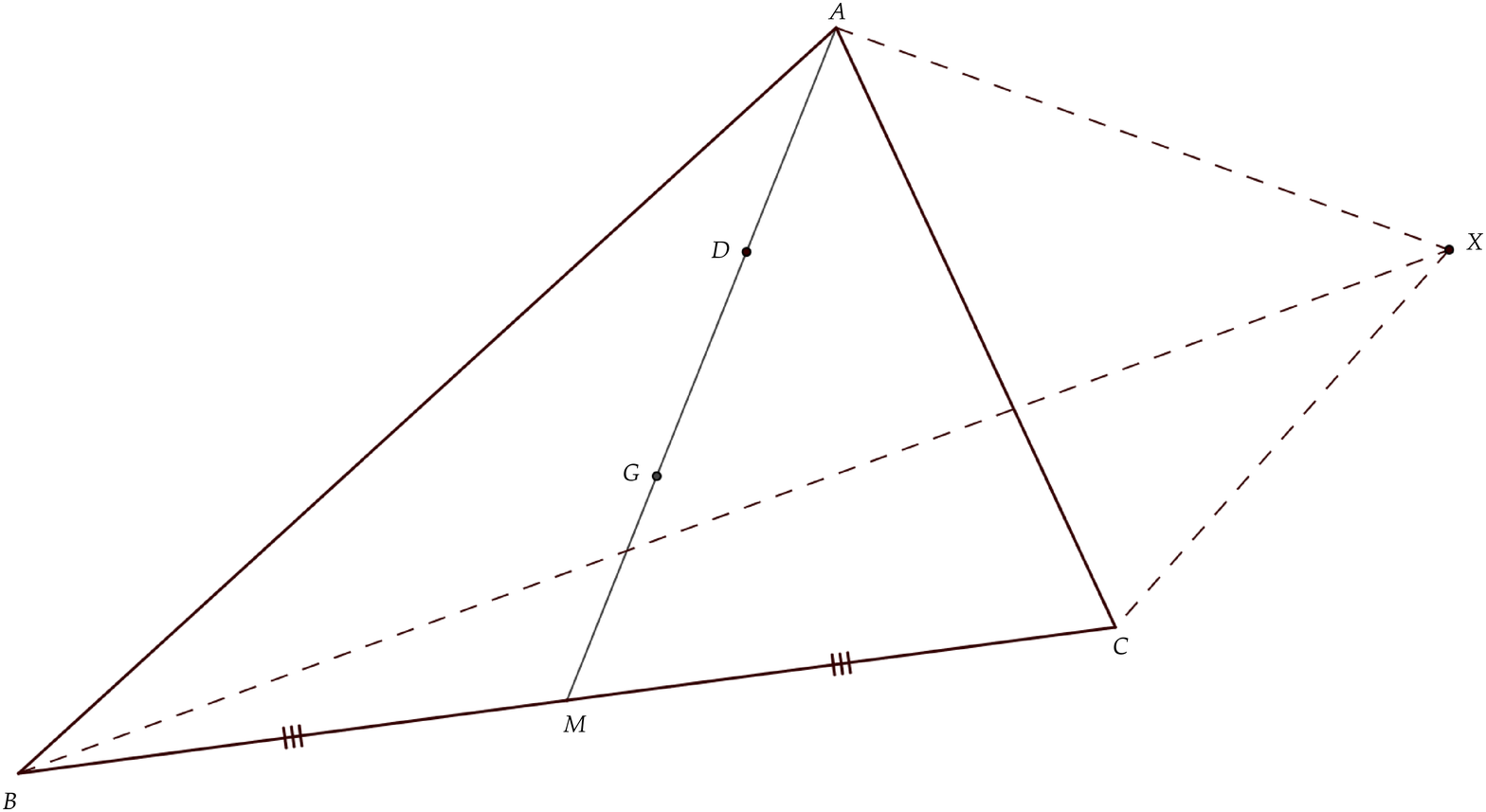}\\
\caption{}\label{210}
\end{center}
\end{figure}
\begin{align*}
&\text{En el $\triangle XDM: {XD}^2+{XM}^2=2{XG}^2+\dfrac{1}{2}{DM}^2$}\\
&\text{En el $\triangle XAG: {XA}^2+{XG}^2=2{XD}^2+\dfrac{1}{2}{AG}^2$}
\end{align*}
\noindent Multiplicando la segunda ecuación por 2 y añadiendo:
\begin{align*}
{XA}^2+{XB}^2+{XC}^2-3{XG}^2&=\dfrac{a^2}{2}+{DM}^2+\dfrac{1}{2}{AG}^2\\
&\underset{\uparrow}{=}\dfrac{a^2}{2}+\dfrac{3}{2}{AG}^2\\
&DM=AG
\end{align*}
\noindent O sea que,
\begin{equation}\label{182}
{XA}^2+{XB}^2+{XC}^2-3{XG}^2=\dfrac{a^2}{2}+\dfrac{3}{2}{AG}^2
\end{equation}
\item[$\bullet$] Consideremos la mediana de $\overline{BG}$.\\[.3cm] Llamemos $E$ el punto medio de $\overline{BG}$, (véase Fig. \ref{211}).\\[.3cm]
\begin{figure}[ht!]
\begin{center}
\includegraphics[scale=0.3]{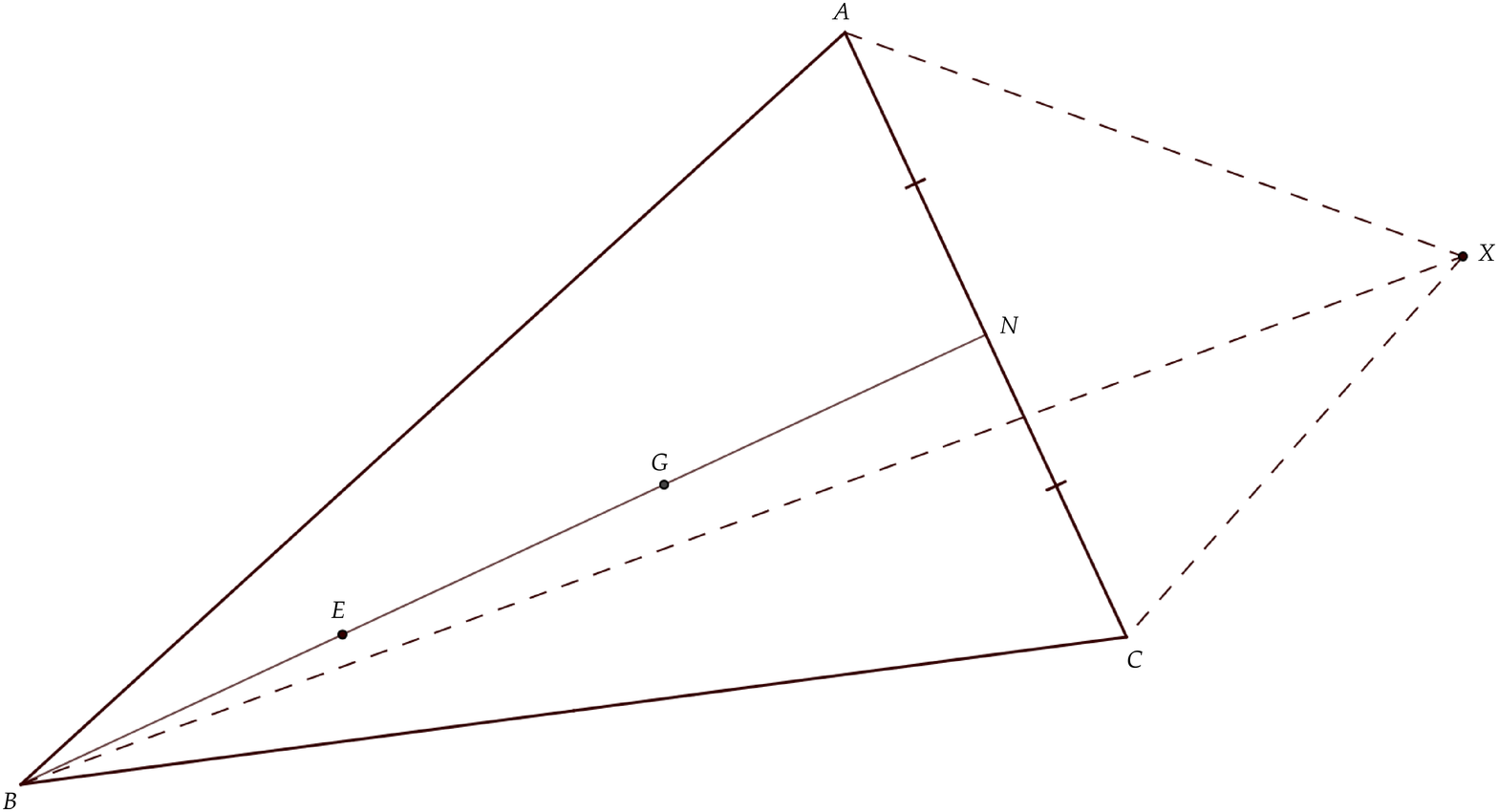}\\
\caption{}\label{211}
\end{center}
\end{figure}
\newline
Aplicando el Teorema de la mediana al triángulo $\triangle XAC$:
\begin{align*}
\shortintertext{Al $\triangle XAC$}
{XA}^2+{XC}^2&=2{XN}^2+\dfrac{b^2}{2}\\
\shortintertext{En el $\triangle XNC$:}
{XN}^2+{XE}^2&=2{XG}^2+\dfrac{{EN}^2}{2}\\
\shortintertext{En el $\triangle XGB$:}
{XG}^2+{XB}^2&=2{XE}^2+\dfrac{{BG}^2}{2}
\end{align*}
Multiplicando la segunda ecuación por 2 y sumando:
\begin{align*}
{XA}^2+{XB}^2+{XC}^2-3{XG}^2&=\dfrac{b^2}{2}+{EN}^2+\dfrac{{BG}^2}{2}\\
&\underset{\uparrow}{=}\dfrac{b^2}{2}+\dfrac{3}{2}{BG}^2\\
&=EN=BG
\end{align*}
O sea que,
\begin{equation}\label{183}
{XA}^2+{XB}^2+{XC}^2-3{XG}^2=\dfrac{b^2}{2}+\dfrac{3}{2}{BG}^2
\end{equation}
\item[$\bullet$] Consideremos ahora la mediana de $\overline{CP}$.\\[.3cm] Llamemos $F$ el punto medio de $\overline{CG}$, (véase Fig. \ref{212}).\\[.3cm]
\begin{figure}[ht!]
\begin{center}
\includegraphics[scale=0.3]{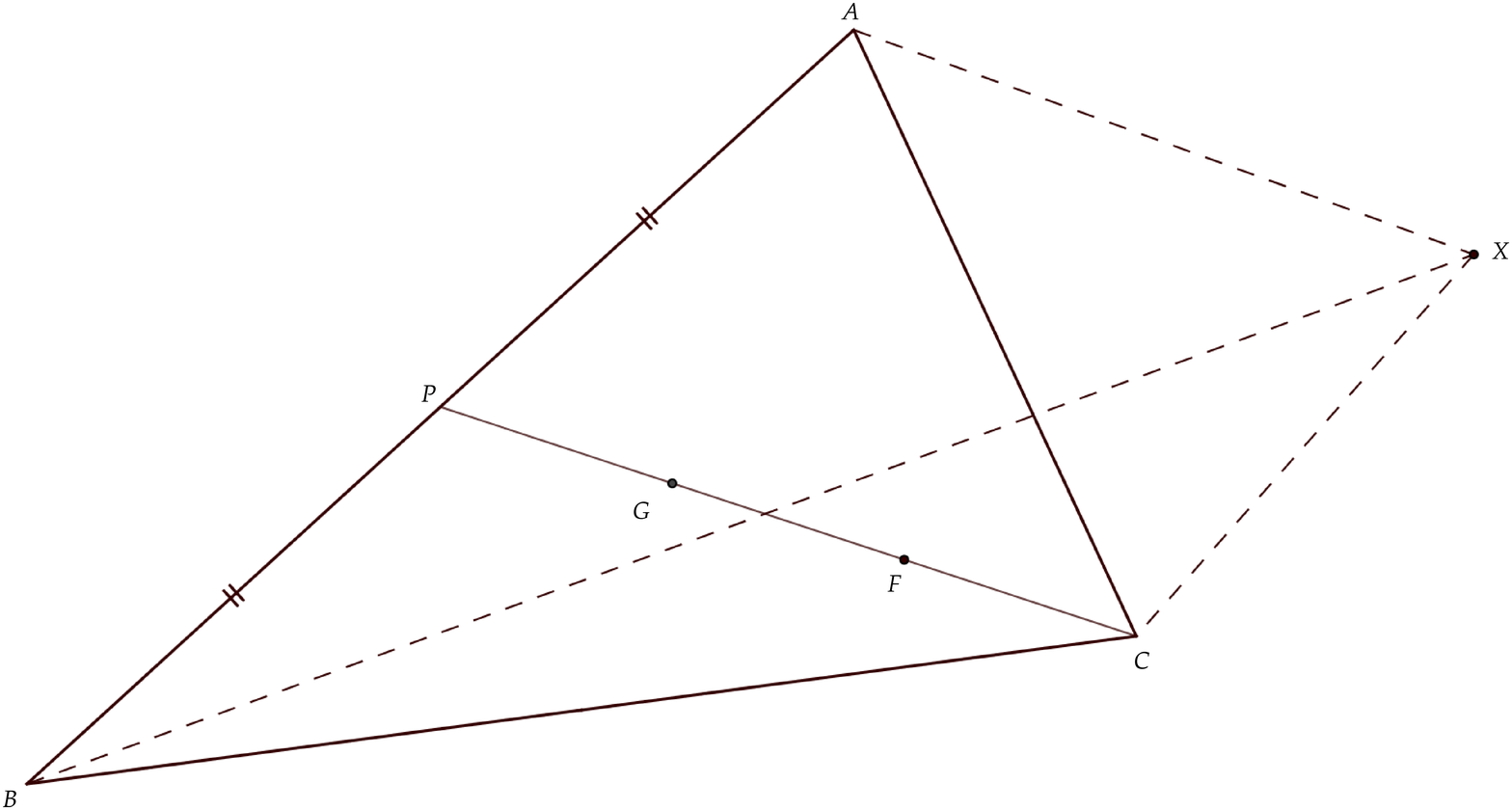}\\
\caption{}\label{212}
\end{center}
\end{figure}
\newline
Al aplicar el Teorema de la mediana al triángulo $\triangle XAB$:
\begin{align*}
{XA}^2+{XB}^2&=2{XP}^2+\dfrac{c^2}{2}\\
\shortintertext{En el $\triangle XPF$:}
{XP}^2+{XF}^2&=2{XG}^2+\dfrac{{AF}^2}{2}\\
\shortintertext{En el $\triangle XGC$:}
{XG}^2+{XC}^2&=2{XF}^2+\dfrac{{GC}^2}{2}
\end{align*}
Multiplicando la segunda ecuación por dos y sumando:
\begin{align*}
{XA}^2+{XB}^2+{XC}^2-3{XG}^2&=\dfrac{c^2}{2}+{PF}^2+\dfrac{{GC}^2}{2}\\
&\underset{\uparrow}{=}\dfrac{c^2}{2}+\dfrac{3}{2}{GC}^2\\
&PF=GC
\end{align*}
O sea que,
\begin{equation}\label{184}
{XA}^2+{XB}^2+{XC}^2-3{XG}^2=\dfrac{c^2}{2}+\dfrac{3}{2}{GC}^2
\end{equation}
Al sumar \eqref{182}, \eqref{183} y \eqref{184}:
\begin{align*}
3\left({XA}^2+{XB}^2+{XC}^2\right)-9{XG}^2&=\dfrac{1}{2}\left(a^2+b^2+c^2\right)+\dfrac{3}{2}\left({GA}^2+{GB}^2+{GC}^2\right)\\
&=\dfrac{1}{2}3\left({GA}^2+{GB}^2+{GC}^2\right)+\dfrac{3}{2}\left({GA}^2+{GB}^2+{GC}^2\right)\\
&=\dfrac{6}{2}\left({GA}^2+{GB}^2+{GC}^2\right)\\
&=3\left({GA}^2+{GB}^2+{GC}^2\right)\\
\shortintertext{Luego,}
{XA}^2+{XB}^2+{XC}^2&={GA}^2+{GB}^2+{GC}^2+3{XG}^2\\
\shortintertext{o también,}
{XA}^2+{XB}^2+{XC}^2&=\dfrac{1}{3}(a^2+b^2+c^2)+3{XG}^2
\end{align*}
\end{enumerate}
\end{proof}
\begin{corol}
Si $X$ se toma en $G, XG=0$ y se tiene que,
\begin{align*}
\min\left\{{XA}^2+{XB}^2+{XC}^2\right\}&={GA}^2+{GB}^2+{GC}^2\\
&=\dfrac{1}{3}(a^2+b^2+c^2)
\end{align*}
y se alcanza en el punto $G\hspace{1.5cm}\blacksquare$\\[.3cm]
\end{corol}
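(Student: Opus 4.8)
The plan is to obtain this corollary as an immediate consequence of part $(2)$ of Problema~\ref{208} together with the Corolario proved just above it, with no new computation required.

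First, recall the identity
\begin{equation*}
{XA}^2+{XB}^2+{XC}^2=\left({GA}^2+{GB}^2+{GC}^2\right)+3\,{XG}^2,
\end{equation*}
valid for every point $X$ of the plane. Since $3\,{XG}^2\geqslant 0$, this yields at once
\begin{equation*}
{XA}^2+{XB}^2+{XC}^2\geqslant {GA}^2+{GB}^2+{GC}^2\qquad\text{for all }X,
\end{equation*}
and equality holds precisely when ${XG}^2=0$, i.e.\ when $X=G$. Hence the function $X\mapsto {XA}^2+{XB}^2+{XC}^2$ attains its minimum, the minimum value equals ${GA}^2+{GB}^2+{GC}^2$, and it is attained only at the centroid $G$.

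To obtain the second equality in the statement, I would simply rewrite ${GA}^2+{GB}^2+{GC}^2$ by invoking the Corolario already established (the sum of the squares of the distances from the centroid to the vertices is $\tfrac13$ of the sum of the squares of the sides), so that ${GA}^2+{GB}^2+{GC}^2=\tfrac13(a^2+b^2+c^2)$, and therefore $\min\{{XA}^2+{XB}^2+{XC}^2\}=\tfrac13(a^2+b^2+c^2)$.

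I do not expect any real obstacle: the substantive work --- establishing the identity with the $3\,{XG}^2$ term and evaluating ${GA}^2+{GB}^2+{GC}^2$ --- has already been carried out in Proposici\'on~\ref{181}, in its Corolario, and in the computation for Problema~\ref{208}. The only point worth stating carefully is the uniqueness of the minimizer, which follows from the fact that $3\,{XG}^2=0$ forces $X=G$; if one only cares about the minimal value and not about uniqueness, it suffices to evaluate the identity at $X=G$.
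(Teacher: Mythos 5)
Your proposal is correct and coincides with the paper's own treatment: the corollary is obtained there as an immediate consequence of the identity ${XA}^2+{XB}^2+{XC}^2=({GA}^2+{GB}^2+{GC}^2)+3\,{XG}^2$ from Problema~\ref{208} together with ${GA}^2+{GB}^2+{GC}^2=\tfrac{1}{3}(a^2+b^2+c^2)$, exactly as you argue. Your remark that $3\,{XG}^2\geqslant 0$ with equality only at $X=G$ (hence uniqueness of the minimizer) is a slightly more explicit justification than the paper gives, but it is the same approach.
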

\begin{corol}
Si $X$ se toma en $O$: el circuncentro del triángulo $\triangle ABC$,\\
%\begin{figure}[ht!]
%\begin{center}
%\includegraphics[width=]{}\\
%\caption{}\label{}
%\end{center}
%\end{figure}
\begin{align*}
{OA}^2+{OB}^2+{OC}^2&={GA}^2+{GB}^2+{GC}^2+3{OG}^2\\
\shortintertext{Pero,}
OA&=OB=OC=R:\text{radio de la circunferencia circunscrita al triángulo}\\
\shortintertext{Luego,}
3R^2&=\dfrac{1}{3}(a^2+b^2+c^2)+3{OG}^2\\
\shortintertext{y por lo tanto,}
{OG}^2&=R^2-\dfrac{1}{9}(a^2+b^2+c^2)\hspace{1.5cm}\blacksquare
\end{align*}
\end{corol}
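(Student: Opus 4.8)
The plan is to apply the general identity of Problem~\ref{208}, part~(2), to the single point $X=O$, the circumcenter of $\triangle ABC$. Recall that this identity asserts that for every point $X$ of the plane,
\[
XA^2+XB^2+XC^2=\bigl(GA^2+GB^2+GC^2\bigr)+3\,XG^2,
\]
and that, by part~(1) of the same problem (equivalently, by the Corollary on the sum of the squares of the distances from the centroid to the vertices), the constant on the right equals $\tfrac13(a^2+b^2+c^2)$.

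First I would invoke the defining property of the circumcenter: $O$ is equidistant from the three vertices, so $OA=OB=OC=R$, where $R$ is the circumradius. Substituting $X=O$ into the identity above turns the left-hand side into $3R^2$, giving
\[
3R^2=\tfrac13(a^2+b^2+c^2)+3\,OG^2.
\]

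Then it only remains to solve for $OG^2$: dividing by $3$ and rearranging yields
\[
OG^2=R^2-\tfrac19(a^2+b^2+c^2),
\]
which is the asserted formula.

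I do not expect any real obstacle here: every geometric ingredient has already been established — the median theorem, Proposition~\ref{181} on the sum of the squares of the medians, its corollary on the centroid, and both parts of Problem~\ref{208} — so the statement is a one-line specialization. The only point worth a remark is that the right-hand side $R^2-\tfrac19(a^2+b^2+c^2)$ must be nonnegative, being a squared length; this recovers the classical inequality $a^2+b^2+c^2\le 9R^2$ as a free byproduct and serves as an internal consistency check.
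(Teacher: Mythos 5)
Tu propuesta es correcta y sigue exactamente el mismo camino que el texto: se sustituye $X=O$ en la identidad del Problema~\ref{208}, se usa $OA=OB=OC=R$ para obtener $3R^2=\tfrac13(a^2+b^2+c^2)+3\,OG^2$ y se despeja $OG^2$. La observaci\'on adicional sobre la desigualdad $a^2+b^2+c^2\le 9R^2$ es un complemento v\'alido, pero el argumento esencial coincide con el del art\'iculo.
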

\newpage
\thispagestyle{empty}

\phantomsection
\nocite{*}
\bibliographystyle{abbrvnat}
\bibliography{hiper}

\end{document}